\newcommand{\R}{\mathbb{R}}
\newcommand{\C}{\mathbb{C}}
\DeclareMathOperator*{\Res}{Res}
\renewcommand\atop[2]{\genfrac{}{}{0pt}{}{#1}{#2}}
\def\th@plain{%
  \thm@notefont{}
  \itshape 
}
\def\th@definition{%
  \thm@notefont{}
  \normalfont 
}
\theoremstyle{plain}
\newtheorem{lause}[equation]{Theorem}
\newtheorem{lem}[equation]{Lemma}
\newtheorem{kor}[equation]{Corollary}
\theoremstyle{definition}
\theoremstyle{remark}
\newtheorem{huom}[equation]{Remark}
\title[Local characterizations for the matrix monotonicity and convexity]{Local characterizations for the matrix monotonicity and convexity of fixed order}
\author{Otte Heinävaara}
\address{University of Helsinki, Department of Mathematics and Statistics, P.O. Box 68 (Gustaf Hällströmin katu 2b), FI-00014 University of Helsinki}
\email{otte.heinavaara@helsinki.fi}
\begin{document}

\begin{abstract}
	We establish local characterizations of matrix monotonicity and convexity of fixed order by giving integral representations connecting the Loewner and Kraus matrices, previously known to characterize these properties, to respective Hankel matrices. Our results are new already in the general case of matrix convexity and our approach significantly simplifies the corresponding work on matrix monotonicity. We also obtain an extension of the original characterization for matrix convexity by Kraus, and tighten the relationship between monotonicity and convexity.
\end{abstract}

\subjclass[2010]{Primary 26A48; Secondary 26A51, 47A63}
\keywords{Matrix monotone functions, Matrix convex functions}

\maketitle

\section{Introduction}

For an open interval $(a, b)$, we say that $f : (a, b) \to \R$ is matrix monotone (increasing) of order $n$ (or $n$-monotone) if for any $n \times n$ Hermitian matrices $A, B$ with spectra in $(a, b)$ and $A \leq B$ we have $f(A) \leq f(B)$.\footnote{As usual, the space of Hermitian matrices is equipped with the Loewner order, i.e. the partial order induced by the convex cone of positive semi-definite matrices.}  Analogously, $f : (a, b) \to \R$ is matrix convex of order $n$ (or $n$-convex) if for any $n \times n$ Hermitian matrices $A, B$ with spectra in $(a, b)$ and $\lambda \in [0, 1]$ we have $f(\lambda A + (1 - \lambda) B) \leq \lambda f(A) + (1 - \lambda) f(B)$.

Ever since Charles Loewner (then known as Karl Löwner) introduced matrix monotone functions in 1934 \cite{Low}, this class has been characterized in various ways. See for example \cite{Chan, Han} for survey and recent progress. The famous theorem established in the Loewner's paper states that a function that is matrix monotone of all orders on an interval, extends to upper half-plane as a Pick-Nevanlinna function: an analytic function with non-negative imaginary part. Loewner's proof of this jewel is based on an important characterization in terms of divided differences here denoted by $[\cdot, \cdot, \ldots, \cdot]_{f}$. Recall that divided differences are defined recursively by $[\lambda]_{f} = f(\lambda)$ and for distinct $\lambda_{1}, \lambda_{2}, \ldots, \lambda_{n} \in (a, b)$,
\begin{align*}
	[\lambda_{1}, \lambda_{2}, \ldots, \lambda_{n}]_{f} = \frac{[\lambda_{1}, \lambda_{2}, \ldots, \lambda_{n - 1}]_{f} - [\lambda_{2}, \lambda_{3}, \ldots, \lambda_{n}]_{f}}{\lambda_{1} - \lambda_{n}}.
\end{align*}
If $f \in C^{n - 1}(a, b)$, divided difference has continuous extension to all tuples of not necessarily distinct $n$ numbers on the interval \cite{Boo}.
\begin{lause}[Loewner]\label{basic_mon}
A function $f : (a, b) \to \R$ is $n$-monotone (for $n \geq 2$) if and only if $f \in C^{1}(a, b)$ and the Loewner matrix
\begin{align}\label{Ldef}
L = ([\lambda_{i}, \lambda_{j}]_{f})_{1 \leq i, j \leq n}
\end{align}
is positive\footnote{Here and in the following, positivity of matrix means that it is positive semi-definite.} for any tuple of numbers $(\lambda_{i})_{i = 1}^{n}$ on the same interval.
\end{lause}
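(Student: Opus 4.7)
The plan is to combine the Daleckii--Krein first order perturbation formula with the Schur product theorem. Recall Daleckii--Krein: if $A$ is Hermitian with spectral decomposition $A = U \operatorname{diag}(\lambda_{1}, \ldots, \lambda_{n}) U^{*}$ (distinct $\lambda_{i}$) and $f \in C^{1}$ near $\operatorname{spec}(A)$, then for any Hermitian $H$,
\begin{align*}
	\left.\frac{d}{dt}\right|_{t = 0} f(A + tH) = U\bigl[L \odot (U^{*} H U)\bigr] U^{*},
\end{align*}
where $L$ is the Loewner matrix \eqref{Ldef} and $\odot$ denotes the Hadamard (entrywise) product. The Schur product theorem says the Hadamard product of two positive matrices is positive.

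For sufficiency, assume $L \geq 0$ at every tuple. Given $A \leq B$ with spectra in $(a, b)$, set $C(t) = A + t(B - A)$ and $H = B - A$; $H$ is positive. At every $t \in [0, 1]$ at which $C(t)$ has simple spectrum (a cofinite, hence dense set), the formula above gives $\frac{d}{dt} f(C(t)) = U(t)\bigl[L(t) \odot (U(t)^{*} H U(t))\bigr] U(t)^{*}$. Both $L(t)$ and $U(t)^{*} H U(t)$ are positive, so by Schur the derivative is positive. Integrating across $[0, 1]$ gives $f(B) \geq f(A)$.

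For necessity, I would first establish $f \in C^{1}$. Since $f$ is 1-monotone, it is nondecreasing and a.e.\ differentiable, and applying 2-monotonicity to diagonal $2 \times 2$ matrices perturbed by small positive rank-one matrices pins the divided difference $[\lambda_{1}, \lambda_{2}]_{f}$ between the one-sided derivatives of $f$ at $\lambda_{1}$ and $\lambda_{2}$; this squeeze forces $f'$ to exist and be continuous. With $f \in C^{1}$ in hand, fix distinct $\lambda_{i}$, take $A = \operatorname{diag}(\lambda_{i})$, and let $H$ be any positive Hermitian matrix. Then $A + tH \geq A$ for small $t > 0$, so by hypothesis $f(A + tH) \geq f(A)$; dividing by $t$ and letting $t \downarrow 0$ gives $L \odot H \geq 0$ via Daleckii--Krein. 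Choosing $H$ to be the rank-one all-ones matrix $J$ yields $L = L \odot J \geq 0$, and continuity of divided differences extends the conclusion to tuples with repeated entries.

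The main obstacle is the regularity step: deducing $f \in C^{1}$ from $n$-monotonicity (for $n \geq 2$) without circular use of Daleckii--Krein. A secondary nuisance is the measure-zero set of $t$ at which $C(t)$ has coinciding eigenvalues in the sufficiency argument; either integration across this set or a small perturbation producing simple spectrum throughout the path resolves it, and the rest of the proof is a packaging exercise around Daleckii--Krein and Schur.
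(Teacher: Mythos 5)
The paper offers no proof of this statement at all: it is quoted as Loewner's classical theorem and used as a black box (the proof of Theorem \ref{loewner-dobsch-donoghue} simply cites \cite{Low} for the equivalence). Your route --- the Daleckii--Krein derivative formula combined with the Schur product theorem --- is the standard modern proof (it is the one in Bhatia's \emph{Matrix Analysis}, Chapter V), and it is sound where you have actually carried it out. The sufficiency direction works; the ``cofinite set of degenerate $t$'' claim is literally false (take $A=0$, $B=I$, so that $C(t)=tI$ never has simple spectrum), but your proposed perturbation fixes it, and more cleanly the Daleckii--Krein formula remains valid at Hermitian matrices with repeated eigenvalues once the Loewner matrix is built from the continuously extended divided differences, so the restriction to simple spectrum is not needed. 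Likewise, once $f\in C^{1}$ is known, the derivation of $L\odot H\geq 0$ for all $H\geq 0$ and hence $L\geq 0$ via $H=J$ is correct.

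The genuine gap is the one you flag yourself: the necessity direction must prove that $n$-monotonicity for some $n\geq 2$ forces $f\in C^{1}$, and your sketch of that step does not work as stated. Pinning $[\lambda_{1},\lambda_{2}]_{f}$ between one-sided derivatives at the endpoints is exactly the estimate that every ordinary convex function satisfies, and convex functions need not be differentiable; that squeeze alone cannot produce $f'$, let alone its continuity. The workable repair is the one the paper's own regularization setup suggests: $f_{\varepsilon}$ is smooth and still $2$-monotone on a slightly smaller interval, so Daleckii--Krein applies to $f_{\varepsilon}$ without circularity and yields the $2\times 2$ determinant inequality $f_{\varepsilon}'(\lambda_{1})\,f_{\varepsilon}'(\lambda_{2})\geq [\lambda_{1},\lambda_{2}]_{f_{\varepsilon}}^{2}$; this gives two-sided bounds on difference quotients that are uniform in $\varepsilon$, from which one extracts local Lipschitz continuity of $f$, then existence of $f'$, and only after a further argument its continuity. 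This occupies a substantial part of Donoghue's monograph \cite{Don} and is the real content of the ``only if'' direction beyond positivity of $L$. As written, your argument proves the weaker statement: \emph{for $f\in C^{1}(a,b)$}, $n$-monotonicity is equivalent to positivity of all Loewner matrices \eqref{Ldef}.
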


Similarly Kraus, a student of Loewner introduced the matrix convexity in \cite{Kra} and established similar characterization:

\begin{lause}[Kraus]\label{basic_con}
A function $f : (a, b) \to \R$ is $n$-convex (for $n \geq 2$) if and only if $f \in C^{2}(a, b)$ and the Kraus matrix
\begin{align}\label{Krdef}
Kr = ([\lambda_{i}, \lambda_{j}, \lambda_{0}]_{f})_{1 \leq i, j \leq n}
\end{align}
is positive for any tuple of numbers $(\lambda_{i})_{i = 1}^{n} \in (a, b)^{n}$ and $\lambda_{0} \in (\lambda_{i})_{i = 1}^{n}$.
\end{lause}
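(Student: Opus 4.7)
The plan is to mimic the proof of Loewner's theorem for monotonicity (Theorem~\ref{basic_mon}), with the natural replacement that the first derivative of $t \mapsto f(A + tH)$ is swapped for the second derivative, and the Loewner matrix for the Kraus matrix. I handle necessity and sufficiency in turn.

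Necessity: assuming $f$ is $n$-convex, the preliminary step is regularity, $f \in C^{2}(a, b)$. Scalar convexity already gives continuity and one-sided derivatives, and the bootstrap to $C^{2}$ is carried out by exploiting matrix convexity along the same lines as Loewner's $C^{1}$-argument in the monotone setting. With smoothness in hand, I diagonalize $A = \operatorname{diag}(\lambda_{1}, \ldots, \lambda_{n})$ with pairwise distinct eigenvalues in $(a, b)$ and take an arbitrary Hermitian $H$. The Daletskii--Krein-type formula for the second Fréchet derivative yields
\begin{align*}
	M_{ij} := \left(\frac{d^{2}}{dt^{2}} f(A + tH)\Big|_{t = 0}\right)_{ij} = 2\sum_{k = 1}^{n}[\lambda_{i}, \lambda_{k}, \lambda_{j}]_{f}\, H_{ik} H_{kj},
\end{align*}
and matrix convexity of $f$ forces $M \succeq 0$ for every Hermitian $H$. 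Specializing to the rank-one positive choice $H = vv^{*}$, a direct manipulation rewrites this as
\begin{align*}
	\xi^{*} M \xi = 2\sum_{k = 1}^{n}|v_{k}|^{2}\, u^{*} Kr_{k}\, u, \quad u_{i} = \overline{\xi_{i}} v_{i},
\end{align*}
where $Kr_{k}$ is the Kraus matrix corresponding to $\lambda_{0} = \lambda_{k}$. Taking $v$ with all nonzero coordinates allows $u$ to range freely over $\C^{n}$ as $\xi$ varies; dominating a single $|v_{k_{0}}|^{2}$ over the rest then isolates $Kr_{k_{0}} \succeq 0$, yielding the desired positivity for each admissible choice $\lambda_{0} \in (\lambda_{i})_{i = 1}^{n}$.

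Sufficiency: given $f \in C^{2}$ and positivity of every Kraus matrix, the same identity, applied this time to an arbitrary Hermitian $H$, expresses $\xi^{*} M \xi$ as a manifestly nonnegative sum of quadratic forms in the $Kr_{k}$, so $M \succeq 0$. Reading this as the statement that $t \mapsto f(tB + (1 - t)A)$ has positive matrix-valued second derivative for Hermitian $A, B$ with spectra in $(a, b)$, a single integration delivers the matrix convexity inequality; the case of coincident eigenvalues of $A$ is reached by continuity in the $\lambda_{i}$.

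The main obstacle is the $C^{2}$-regularity step: the rest is a formal consequence of the Daletskii--Krein expansion together with the rank-one reduction. The latter is the key conceptual trick, converting the second-derivative positivity, which lives over the large space of Hermitian $H$, into the positivity of the individual and much smaller Kraus matrices.
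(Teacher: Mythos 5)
The paper does not prove this theorem at all: it is quoted as Kraus's classical result and, in the proof of Theorem~\ref{convex_complete}, the equivalence is simply attributed to \cite{Kra}. So there is no internal proof to compare against; I can only assess your argument on its own terms. Its algebraic core is the standard route and is correct: the Daletskii--Krein formula $\bigl(\tfrac{d^{2}}{dt^{2}}f(A+tH)|_{t=0}\bigr)_{ij}=2\sum_{k}[\lambda_i,\lambda_k,\lambda_j]_f H_{ik}H_{kj}$ is right, and your decomposition $\xi^{*}M\xi=2\sum_k (w^{(k)})^{*}Kr_k\, w^{(k)}$ with $w^{(k)}_i=H_{ki}\xi_i$ gives sufficiency for arbitrary Hermitian $H$, while the rank-one specialization plus the scaling $|v_{k_0}|\to\infty$ correctly isolates each $Kr_{k_0}\succeq 0$ for necessity (the reduction to diagonal $A$ with distinct eigenvalues, and the passage to repeated $\lambda_i$ by continuity of second divided differences of $C^2$ functions, are both legitimate).

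The genuine gap is exactly the one you flag and then wave away: the a priori $C^{2}$ regularity of an $n$-convex function. This cannot be dismissed as ``along the same lines as Loewner's $C^{1}$-argument,'' because scalar convexity only gives one-sided first derivatives, and your entire necessity argument is predicated on already having the second Fr\'echet derivative, so the logic is circular as written: you need $C^2$ to write down $M$, but you propose to extract $C^2$ from the positivity of $M$. The known resolutions require real work --- Kraus's original bootstrap from the positivity of $2\times 2$ Kraus matrices (which forces the first divided difference to be locally Lipschitz and then differentiable), or the regularization scheme used elsewhere in this very paper: prove everything for the mollifications $f_{\varepsilon}$, obtain locally uniform bounds on $f_{\varepsilon}''$ from the Kraus/Hankel positivity, and pass to the limit. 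Without one of these, the necessity direction of the theorem (in particular the assertion $f\in C^{2}(a,b)$, which is part of the statement) is unproved. The rest of your outline I would accept as a correct reconstruction of the classical proof.
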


A different, local characterization for monotonicity was given by another student of Loewner, Dobsch in \cite{Dob}:

\begin{lause}[Dobsch, Donoghue]\label{hankel_mon}
A $C^{2 n - 1}$ function $f : (a, b) \to \R$ is $n$-monotone if and only if the Hankel matrix
\begin{align}\label{Mdef}
M(t) = \left(\frac{f^{(i + j - 1)}(t)}{(i + j - 1)!}\right)_{1 \leq i, j \leq n}
\end{align}
is positive for any $t \in (a, b)$.
\end{lause}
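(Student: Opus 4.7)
The plan is to invoke Loewner's theorem (Theorem~\ref{basic_mon}) to recast the statement as the equivalence
\[
L \succeq 0 \text{ for every tuple in } (a,b)^n \iff M(t) \succeq 0 \text{ for every } t \in (a,b).
\]
The bridge between the two families is the Taylor expansion of the Loewner kernel $[\lambda_i,\lambda_j]_f$ around a point $t$: writing $y_i := \lambda_i - t$, for $f \in C^{2n-1}$ one has
\[
[\lambda_i, \lambda_j]_f \;=\; \sum_{p+q \leq 2n-2} \frac{f^{(p+q+1)}(t)}{(p+q+1)!}\, y_i^p\, y_j^q \;+\; o\bigl((|y_i|+|y_j|)^{2n-2}\bigr),
\]
whose restriction to the block $0 \leq p,q \leq n-1$ is exactly a matrix congruence $V^{\!\top} M(t)\, V$ with $V_{pi} = y_i^{p-1}$.

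For the direction $L \succeq 0 \Rightarrow M(t) \succeq 0$, I would degenerate the tuple by setting $\lambda_i(\epsilon) := t + \epsilon x_i$ for fixed distinct reals $x_1,\ldots,x_n$ and small $\epsilon > 0$. Given a test vector $v \in \R^n$, solve the Vandermonde system $\sum_i c_i(\epsilon)\, x_i^{\,p} = \epsilon^{-p} v_p$ for $p = 0,\ldots,n-1$; the coefficients grow only as $O(\epsilon^{-(n-1)})$. Splitting the Taylor sum according to whether the indices fall inside the block $0 \leq p,q \leq n-1$ or outside it, a bookkeeping calculation gives $c^{\!\top} L c \to v^{\!\top} M(t) v$ as $\epsilon \to 0$: the extra blocks carry a surplus factor $\epsilon^{\alpha}$ with $\alpha \geq 1$, which suppresses them in spite of the $\epsilon^{-2(n-1)}$ blow-up of $\|c\|^2$, and the Taylor remainder paired with $\|c\|^2$ is $o(1)$. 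Non-negativity of $c^{\!\top} L c$ for every $\epsilon > 0$ then yields $v^{\!\top} M(t) v \geq 0$.

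The converse $M(t) \succeq 0 \Rightarrow L \succeq 0$ is the delicate direction, and is precisely where the integral representation announced in the abstract is indispensable. The plan is to establish an identity of the schematic form
\[
L \;=\; \int_I K(t;\lambda)^{\!\top} M(t)\, K(t;\lambda)\, d\mu(t),
\]
valid for a suitable interval $I \subseteq (a,b)$, a positive Borel measure $d\mu$, and a matrix-valued kernel $K(t;\lambda)$ built from Newton- or Lagrange-type interpolation polynomials at the nodes $\lambda_1,\ldots,\lambda_n$. A natural starting point is the Hermite--Genocchi formula $[\lambda_i,\lambda_j]_f = \int_0^1 f'\bigl(s\lambda_i + (1-s)\lambda_j\bigr)\,ds$, integrated by parts repeatedly until the derivative orders line up with the entries of $M(t)$. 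The main obstacle is the construction itself: pinning down the correct kernel $K$ and measure $d\mu$ so that no negative boundary contributions remain, and checking that the right-hand side indeed reproduces $L$. Once such a representation is in hand, the reverse implication is immediate, since the integrand is a pointwise positive semi-definite matrix whenever $M(t) \succeq 0$.
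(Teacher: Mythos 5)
Your reduction to Loewner's Theorem~\ref{basic_mon} is the right first step, and your argument for the direction $L \succeq 0 \Rightarrow M(t) \succeq 0$ is essentially correct: with $\lambda_i = t + \epsilon x_i$ for fixed distinct $x_i$ and the Vandermonde choice of $c(\epsilon)$ with $\|c\| = O(\epsilon^{-(n-1)})$, every term of the expansion with an index $p \geq n$ forces $q \leq n-2$ and carries a net surplus factor $\epsilon^{p-n+1}$, so $c^{T} L\, c \to v^{T} M(t)\, v$. This is in substance Dobsch's original necessity argument; the paper does not reprove it but simply cites \cite{Dob}.

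The converse, which you correctly single out as the delicate direction, is where your proposal stops short of a proof: you write down the desired schematic identity $L = \int K^{T} M(t) K\, d\mu$ and then concede that constructing $K$ and $d\mu$ and verifying the identity is the main obstacle. That construction is the entire content of the theorem, and your proposed entry point (Hermite--Genocchi plus repeated integration by parts) does not obviously produce a congruence of $M(t)$ by a single $t$-dependent matrix. The paper proceeds differently: it works with the resolvents $h_z(x) = (z-x)^{-1}$, for which both $M_n(t,h_z)$ and $L(\Lambda,h_z)$ are rank one, and exhibits the explicit kernel $C(t,\Lambda)$ whose $j$-th column lists the coefficients of $g_{j,\Lambda}(t,y)=\prod_{k\neq j}(1+y(t-\lambda_k))$; this gives the exact algebraic identity $C^{T}M_n(t,h_z)C = L(\Lambda,h_z)\,p_{\Lambda}(z)^2 (z-t)^{-2n}$ (Lemma~\ref{inverse_d}). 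The measure is the explicit piecewise-polynomial weight $I_{\Lambda}(t)=\sum_{\lambda_i<t}\Res_{z=\lambda_i}\bigl(-(z-t)^{2n-2}p_{\Lambda}(z)^{-2}\bigr)$, a B-spline/Peano kernel, chosen so that $(2n-1)\int I_{\Lambda}(t)(z-t)^{-2n}\,dt = p_{\Lambda}(z)^{-2}$ (Lemma~\ref{crazy_lemma}); the general $C^{2n-1}$ case then follows from Cauchy's formula for divided differences, Fubini, and polynomial approximation. Finally, the non-negativity of $I_{\Lambda}$ --- without which the representation proves nothing about positivity, and which your sketch does not address at all --- is a separate nontrivial step (Lemma~\ref{mon_pos}), proved via the inversion $w=1/z$ and the mean value property of divided differences. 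Until you supply concrete analogues of these three ingredients (the kernel identity, the moment identity, and the positivity of the weight), the sufficiency direction remains unproved.
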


By employing standard regularization techniques, one could further extend this to merely $C^{2 n - 3}$ functions with convex derivative of order $(2 n - 3)$, a class of functions for which the property makes sense for almost every $t$, to obtain the complete local characterization of the matrix monotonicity of fixed order. The result has a striking consequence: $n$-monotonicity is a local property, meaning that if function has it in two overlapping intervals, it has it for their union. This property is actually used in the proof, and although it was noted by Loewner to be easy (\cite[p. 212, Theorem 5.6]{Low}), no rigorous proof was given until 40 years later in the monograph of Donoghue \cite{Don}, and the proof is rather long when $n > 2$.

The main results of this paper establish novel integral representations connecting Hankel matrices to the Loewner and Kraus matrices. These identities give rise to a new simple proof for Theorem \ref{hankel_mon}, and more importantly, settle the conjecture in \cite{Tom} (see also \cite{Tom2}) by establishing similar local characterization for the matrix convex functions.

\begin{lause}\label{hankel_con}
A $C^{2 n}$ function $f : (a, b) \to \R$ is $n$-convex if and only if the Hankel matrix
\begin{align}\label{Kdef}
K(t) = \left(\frac{f^{(i + j)}(t)}{(i + j)!}\right)_{1 \leq i, j \leq n}
\end{align}
is positive for any $t \in (a, b)$.
\end{lause}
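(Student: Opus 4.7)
The strategy is to combine the Kraus criterion (Theorem \ref{basic_con}) with a new integral identity that links the Kraus matrix $Kr$ to the Hankel matrices $K(s)$, $s \in (a, b)$. The target identity is of the schematic form
\begin{align*}
v^{T} Kr\, v = \int_{a}^{b} \langle K(s) u(s, v), u(s, v) \rangle \, d\mu(s),
\end{align*}
where $u(s, v)$ is a vector-valued function depending linearly on the test vector $v$ and on the configuration of nodes $\lambda_{0}, \ldots, \lambda_{n}$, $\mu$ is a nonnegative measure on $(a, b)$ determined by the same nodes, and crucially neither datum depends on $f$. Once such an identity is in hand the sufficiency direction is immediate: if $K(s) \geq 0$ for every $s \in (a, b)$, the integrand is pointwise nonnegative, so $v^{T} Kr\, v \geq 0$ for every $v$ and every configuration, whence Theorem \ref{basic_con} delivers $n$-convexity of $f$.

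For the necessity direction, I would run a confluent limit argument. Fix $t \in (a, b)$, take $\lambda_{0} = t$ and $\lambda_{i} = t + \epsilon \alpha_{i}$ for distinct fixed reals $\alpha_{1}, \ldots, \alpha_{n}$, and Taylor-expand $f$ at $t$ through order $2 n$. Using the classical identity that expresses the second divided difference of the monomial $(x - t)^{k}$ at nodes $\lambda_{i}, \lambda_{j}, t$ as the complete homogeneous symmetric polynomial of $\lambda_{i} - t, \lambda_{j} - t, 0$ of degree $k - 2$, one expands $Kr$ as a matrix polynomial in $\epsilon$ whose coefficients are built from Hankel entries $f^{(i + j)}(t) / (i + j)!$ paired with Vandermonde data in the $\alpha_{i}$. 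Collecting the appropriate orders in $\epsilon$ and invoking invertibility of the $\alpha$-Vandermonde for generic choices, positivity of $Kr$ for all small $\epsilon$ forces $w^{T} K(t) w \geq 0$ for every $w \in \R^{n}$.

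The main obstacle is proving the integral identity. I would begin from the Peano / B-spline representation
\begin{align*}
[\lambda_{i}, \lambda_{j}, \lambda_{0}]_{f} = \int_{a}^{b} f^{(2 n)}(s)\, \Phi_{i, j}(s)\, ds,
\end{align*}
obtained by iterated integration by parts against the second-order B-spline with knots $\lambda_{i}, \lambda_{j}, \lambda_{0}$, exploiting the hypothesis $f \in C^{2 n}$. The task then reduces to showing that at each $s$ the kernel matrix $(\Phi_{i, j}(s))_{i, j}$ admits a decomposition as a nonnegative combination of rank-one matrices $u(s) u(s)^{T}$ contracted against the Hankel template with entries $f^{(i + j)}(s) / (i + j)!$. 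Engineering this factorization, while correctly absorbing the auxiliary node $\lambda_{0}$ that has no counterpart in the monotonicity setting the paper also treats, is the delicate technical step underlying the novel integral representation announced in the introduction, and is the true heart of the argument.
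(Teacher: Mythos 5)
Your high-level architecture for the sufficiency direction is exactly the paper's: express the Kraus matrix as an integral, over the node interval, of matrices of the form $C^{T}(s)K(s)C(s)$ against a nonnegative, $f$-independent weight, and then invoke Theorem \ref{basic_con}. Your confluent-limit sketch for necessity (nodes $t + \epsilon\alpha_{i}$, Taylor expansion, inverse Vandermonde rescaling) is also a legitimate alternative to the paper, which simply cites \cite{Tom} for that direction. The problem is that the one step you defer --- producing the integral identity --- is the entire content of the theorem, and the route you propose to it does not work.

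Concretely, the proposed starting point $[\lambda_{i}, \lambda_{j}, \lambda_{0}]_{f} = \int f^{(2n)}(s)\,\Phi_{i,j}(s)\,ds$ is unobtainable. The Peano kernel theorem for a divided difference on the three knots $\lambda_{i}, \lambda_{j}, \lambda_{0}$ pairs it with $f''$, not $f^{(2n)}$; to raise the order of differentiation by integration by parts you would need successive antiderivatives of the second-order B-spline to vanish outside the node interval, and already the first antiderivative tends to the B-spline's total mass ($\neq 0$) on one side, so non-vanishing boundary terms obstruct any entrywise representation against $f^{(2n)}$. A representation involving the derivatives $f^{(2)}, \ldots, f^{(2n)}$ can only hold for the matrix as a whole, after conjugation by an $s$-dependent matrix that mixes entries --- and finding that matrix together with a weight that is simultaneously exact and nonnegative is the ``delicate technical step'' you leave open. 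The paper produces it by a different mechanism: the matrix $C(t,\Lambda)$ is defined explicitly via the coefficients of the polynomials $g_{j,\Lambda}(t,y) = \prod_{k \neq j}(1 + y(t - \lambda_{k}))$, and the identity is first verified exactly for the resolvents $h_{z}(x) = (z - x)^{-1}$, for which the Hankel matrix is rank one and $C^{T}K_{n}(t, h_{z})C$ collapses to a closed form (Lemma \ref{inverse_d} together with $K_{n}(t, h_{z}) = (z - t)^{-1}M_{n}(t, h_{z})$). The weight $J_{\lambda_{0}}$ is then defined as a sum of residues of $-(z - t)^{2n - 1}/((z - \lambda_{0})p_{\Lambda}(z)^{2})$ precisely so that the scalar identity of Lemma \ref{crazy_convex_lemma} holds, the general $f \in C^{2n}$ case follows by Cauchy's integral formula and polynomial approximation, and the nonnegativity of $J_{\lambda_{0}}$ requires its own argument (Lemma \ref{con_pos}, via the substitution $w = 1/z$ and the mean value theorem for divided differences). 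None of these ingredients is supplied, or correctly approached, by the proposal, so as it stands the sufficiency direction is not proved.
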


Again, with regularizations we may extend this to give a complete local description of matrix convexity of fixed order, which as an immediate corollary gives the expected local property theorem for convexity.

\begin{kor}\label{con_loc}
	For any positive integer $n$, $n$-convexity is a local property.
\end{kor}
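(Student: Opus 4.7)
The plan is to reduce Corollary \ref{con_loc} to Theorem \ref{hankel_con} by a standard mollification argument, exploiting the fact that Theorem \ref{hankel_con} provides a pointwise criterion for $n$-convexity.

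Suppose $f$ is $n$-convex on two overlapping open intervals $I_{1}, I_{2}$, and set $I = I_{1} \cup I_{2}$, which is again an open interval. By Theorem \ref{basic_con}, $f$ is $C^{2}$ on each of $I_{1}$ and $I_{2}$; in particular $f$ is a well-defined continuous function on $I$. Fix a non-negative $C^{\infty}$ bump $\phi_{\epsilon}$ supported in $(-\epsilon, \epsilon)$ with integral $1$, and set $f_{\epsilon} = f * \phi_{\epsilon}$, defined on the $\epsilon$-shrinking $I^{\epsilon}$ of $I$. Then $f_{\epsilon} \in C^{\infty}(I^{\epsilon})$.

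The crucial point is that mollification preserves $n$-convexity on each piece. Indeed, for any $n \times n$ Hermitian matrices $A, B$ with spectra in $I_{j}^{\epsilon}$ and any $\lambda \in [0, 1]$, one has the identity
\begin{align*}
	\lambda f_{\epsilon}(A) + (1 - \lambda) f_{\epsilon}(B) - f_{\epsilon}(\lambda A + (1 - \lambda) B) = \int \phi_{\epsilon}(y)\bigl[\lambda f(A - yI) + (1 - \lambda) f(B - yI) - f(\lambda(A - yI) + (1 - \lambda)(B - yI))\bigr]\, dy,
\end{align*}
where every matrix in the integrand has spectrum in $I_{j}$ for $|y| < \epsilon$. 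Each bracketed term is positive semi-definite by $n$-convexity of $f$ on $I_{j}$, and $\phi_{\epsilon} \geq 0$, so the left-hand side is also positive semi-definite. Hence $f_{\epsilon}$ is $n$-convex on both $I_{1}^{\epsilon}$ and $I_{2}^{\epsilon}$.

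Now apply Theorem \ref{hankel_con}, whose hypothesis $f_{\epsilon} \in C^{2 n}$ is certainly satisfied: the Hankel matrix $K_{\epsilon}(t)$ is positive for every $t \in I_{1}^{\epsilon} \cup I_{2}^{\epsilon}$. For $\epsilon$ smaller than half the length of $I_{1} \cap I_{2}$, this union is all of $I^{\epsilon}$, and so the converse direction of Theorem \ref{hankel_con} gives that $f_{\epsilon}$ is $n$-convex on $I^{\epsilon}$. Finally, since $f$ is continuous on $I$, $f_{\epsilon} \to f$ locally uniformly on $I$; given a compact subinterval $[a', b'] \subset I$, for all sufficiently small $\epsilon$ one has $[a', b'] \subset I^{\epsilon}$, and $n$-convexity of $f_{\epsilon}$ passes to the uniform limit because the defining matrix inequality is closed. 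Thus $f$ is $n$-convex on every compact subinterval of $I$, and therefore on $I$ itself.

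The only substantive step is the mollification preserving $n$-convexity, which is essentially the observation that $n$-convexity is preserved under convex (and hence positive integral) combinations; the rest is routine bookkeeping of domains. No new obstacle appears beyond Theorem \ref{hankel_con} itself.
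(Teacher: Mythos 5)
Your proof is correct and follows essentially the route the paper intends: Corollary \ref{con_loc} is deduced from the pointwise Hankel criterion of Theorem \ref{hankel_con} via the standard regularization argument, which the paper invokes without writing out. Your version simply makes explicit the mollification step (preservation of $n$-convexity under convolution, gluing the Hankel positivity on the overlap, and passing to the locally uniform limit), all of which is sound.
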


As another byproduct, we obtain a slight improvement to Theorem \ref{basic_con}, where $\lambda_{0}$ may now vary freely. This also implies through divided differences a rather direct connection between matrix monotonicity and convexity.

\section{Matrix monotone functions}

\subsection{Integral representation}

In this section we construct the integral representations for the Loewner matrices alluded to in the introduction.

Let $n \geq 2$, $(a, b)$ be an interval, and $\Lambda = (\lambda_{i})_{i = 1}^{n} \in (a, b)^{n}$ be an arbitrary sequence of distinct points in $(a, b)$.

In the following the Loewner and respective Hankel matrices, introduced in the introduction in (\ref{Ldef}) and (\ref{Mdef}), for sufficiently smooth $f : (a, b) \to \R$ and $\lambda_{0} \in (a, b)$ are denoted by $L(\Lambda, f)$ and $M_{n}(t, f)$ respectively.

Recall that as one easily verifies with Cauchy's integral formula and induction, the divided differences can be written as
\begin{align}\label{cauchy_divided}
	[\lambda_{1}, \lambda_{2}, \ldots, \lambda_{n}]_{f} = \frac{1}{2 \pi i} \int_{\gamma} \frac{f(z)}{(z - \lambda_{1})\cdots (z - \lambda_{n})}dz,
\end{align}
for analytic $f$ and suitable closed curve $\gamma$.\footnote{For our purposes, it is enough to consider $f$ analytic in an open half-plane and $\gamma$ a circle in this half-plane enclosing the points $\lambda_{1}, \lambda_{2}, \ldots, \lambda_{n}$.}

Divided differences also admit a natural generalization for the mean value theorem \cite{Boo}. Namely, for an open interval $(a, b)$, $f \in C^{n - 1}(a, b)$ and any tuple of (not necessarily distinct) real numbers $\Lambda = (\lambda_{i})_{i = 1}^{n} \in (a, b)^{n}$ we have
\begin{align}\label{mean_value}
	[\lambda_{1}, \lambda_{2}, \ldots, \lambda_{n}]_{f} = \frac{f^{(n - 1)}(\xi)}{(n - 1)!}
\end{align}
for some $\xi \in [\min(\Lambda), \max(\Lambda)]$.

We shall also need the very basic properties of regularizations. Namely for even, non-negative and smooth function $\phi$ supported on $[-1, 1]$ and with integral $1$, and integrable $f : (a, b) \to \R$, regularization (or $\varepsilon$-regularization, to be precise) of $f$, denoted by $f_{\varepsilon} : (a + \varepsilon, b - \varepsilon) \to \R$ is the convolution
\begin{align*}
	f_{\varepsilon}(x) = \int_{-\infty}^{\infty} f(x - \varepsilon y) \phi(y) d y.
\end{align*}
This is a smooth function, and for any continuity point $x \in (a, b)$ of $f$ we clearly have $\lim_{\varepsilon \to 0} f_{\varepsilon}(x) = f(x)$. Note that regularizations of matrix monotone (convex) functions are obviously matrix monotone (convex) functions on a slightly smaller interval.

Define the functions $g_{j}$ for $1 \leq j \leq n$ by
\begin{align}\label{gdef}
	g_{j, \Lambda}(t, y) = \prod_{k \neq j}(1 + y (t - \lambda_{k})).
\end{align}
Define also the matrix $C(t) := C(t, \Lambda)$ by setting $C_{i, j}$ to be the coefficient of $y^{i - 1}$ in the polynomial $g_{j}(t, y)$, i.e. we have
\begin{align}\label{Cdef}
	g_{j}(t, y) = C_{1, j}(t) + C_{2, j}(t) y + \ldots + C_{n, j}(t) y^{n - 1}.
\end{align}

Define polynomial $p_{\Lambda}$ with $p_{\Lambda}(t) := \prod_{i = 1}^{n}(t - \lambda_{i})$. Also for any $z \in \C$ define function $h_{z}$ by setting $h_{z}(x) = (z-x)^{-1}$.

\begin{lem}\label{inverse_d}
	For $\Lambda = (\lambda_{i})_{i = 1}^{n}$ as before, $t \in \R$, and $z \in \C$ distinct from $t$, we have
	\begin{align*}
		C^{T}\left(t, \Lambda\right)M_{n}\left(t, h_{z} \right)C\left(t, \Lambda \right) = L(\Lambda, h_{z}) \frac{p_{\Lambda}(z)^2}{(z - t)^{2 n}}.
	\end{align*}
\end{lem}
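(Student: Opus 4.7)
The identity has a very transparent structure that I would exploit rather than expanding everything. First I would simply compute the two matrices appearing on the left and right for the special function $h_{z}$. Since $h_{z}^{(k)}(x) = k!/(z-x)^{k+1}$, the Hankel matrix reduces to
$M_{n}(t, h_{z})_{i,j} = 1/(z-t)^{i+j}$, which is the rank-one matrix $v v^{T}$ with $v_{i} = 1/(z-t)^{i}$. A quick calculation of the divided difference $[\lambda_{i}, \lambda_{j}]_{h_{z}}$ (partial fractions) gives $L(\Lambda, h_{z})_{i,j} = 1/((z-\lambda_{i})(z-\lambda_{j}))$, i.e.\ $L(\Lambda, h_{z}) = u u^{T}$ with $u_{j} = 1/(z - \lambda_{j})$.

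With these rank-one factorizations in hand, the claim reduces to showing that $C^{T} v$ is a scalar multiple of $u$, specifically
\begin{equation*}
	C^{T}(t, \Lambda)\, v \;=\; \frac{p_{\Lambda}(z)}{(z-t)^{n}}\, u.
\end{equation*}
Squaring this vector identity (taking $(C^T v)(C^T v)^T$) then recovers the claimed scalar factor $p_{\Lambda}(z)^{2}/(z-t)^{2n}$ multiplying $uu^T = L(\Lambda,h_z)$.

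To prove the vector identity I would use the definition of $C$ directly. The entry $(C^{T}v)_{j} = \sum_{i=1}^{n} C_{i,j}(t)(z-t)^{-i}$ is precisely $(z-t)^{-1}$ times $g_{j, \Lambda}(t, (z-t)^{-1})$, since $g_{j, \Lambda}$ has coefficients $C_{\cdot, j}$ in powers of $y$. But evaluating $g_{j,\Lambda}$ at $y = 1/(z-t)$ is trivial: each factor $1 + (t - \lambda_{k})/(z-t)$ collapses to $(z - \lambda_{k})/(z - t)$, so
\begin{equation*}
	g_{j, \Lambda}\bigl(t, (z-t)^{-1}\bigr) = \prod_{k \neq j} \frac{z - \lambda_{k}}{z - t} = \frac{p_{\Lambda}(z)}{(z - \lambda_{j})(z-t)^{n-1}},
\end{equation*}
which immediately yields $(C^{T}v)_{j} = p_{\Lambda}(z)/\bigl((z-\lambda_{j})(z-t)^{n}\bigr)$, as required.

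There is no real obstacle here: the whole proof is driven by the observation that both $M_n(t,h_z)$ and $L(\Lambda,h_z)$ are rank one for the resolvent $h_z$, and that $C$ was defined in exactly the right way to send the generating vector of $M$ to the generating vector of $L$ (up to the expected rational factor). The only point requiring a little care is keeping the exponents of $(z-t)$ consistent when multiplying through; I would double-check this by noting that the combined power is $-1 - (n-1) = -n$ per factor of $u$, giving $(z-t)^{-2n}$ in the end.
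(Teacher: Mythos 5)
Your proposal is correct and follows essentially the same route as the paper: both exploit the rank-one factorization $M_{n}(t,h_{z}) = vv^{T}$ (up to normalization) and the observation that $(C^{T}v)_{j}$ is a power of $(z-t)^{-1}$ times $g_{j,\Lambda}(t,(z-t)^{-1})$, whose factors collapse to $(z-\lambda_{k})/(z-t)$. The only cosmetic difference is that you also factor $L(\Lambda,h_{z}) = uu^{T}$ and reduce to a vector identity, whereas the paper compares the matrix entries directly.
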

\begin{proof}
	Write $D = C^{T}(t, \Lambda)M_{n}(t, h_{z})C(t, \Lambda)$. Note that as we have $h_{z}^{(k)}(t)/k! = (z - t)^{-k - 1}$, we may write $M_{n}(t, h_{z}) = \frac{1}{(z - t)^{2}}v v^{T}$ with $v = (1, \frac{1}{z - t}, \frac{1}{(z - t)^{2}}, \ldots, \frac{1}{(z - t)^{n - 1}})^{T}$. Thus
	\begin{align*}
		D = \frac{1}{(z - t)^{2}} (C(t,\Lambda)^{T} v) (C(t,\Lambda)^{T} v)^{T}.
	\end{align*}
	One also easily sees that $(C(t,\Lambda)^{T} v)_{i} = g_{i}(t, \frac{1}{z - t})$ so that finally
	\begin{align*}
		D_{i, j} = \frac{g_{i}(t, \frac{1}{z - t})g_{j}(t, \frac{1}{z - t}) }{(z - t)^2} = \frac{1}{(z - t)^{2}}\prod_{k \neq i}\left(1 + \frac{t - \lambda_{k}}{z - t}\right)\prod_{k \neq j}\left(1 + \frac{t - \lambda_{k}}{z - t}\right) = [\lambda_{i}, \lambda_{j}]_{h_{z}} \frac{p_{\Lambda}(z)^2}{(z - t)^{2 n}} .
	\end{align*}
\end{proof}

Consider now the function
\begin{align*}
S(z, t) := S_{\Lambda}(z, t) := -\frac{(z - t)^{2 n - 2}}{p_{\Lambda}(z)^{2}}.
\end{align*}
As $S(z, t)$ decays as $z^{-2}$, with the residue theorem we see that for suitable closed curve $\gamma$ we have
\begin{align*}
	0 = \frac{1}{2 \pi i}\int_{\gamma} S(z, t) d z = \sum_{i = 1}^{n} \Res_{z = \lambda_{i}} S(z, t).
\end{align*}
Defining now the weight functions $I_{i} := I_{i, \Lambda}$ for $1 \leq i \leq n$ by
\begin{align*}
	I_{i}(t) = \Res_{z = \lambda_{i}} S(z, t),
\end{align*}
and 
\begin{align*}
I(t) := I_{\Lambda}(t) := \sum_{\atop{1 \leq i \leq n}{\lambda_{i} < t}}I_{i}(t),
\end{align*}
we see by simple computation that $I_{i}$`s are polynomials such that $I_{i}(\lambda_{i}) = 0$ and $I$ is hence piecewise polynomial, continuous function supported on $[\min(\Lambda), \max(\Lambda)]$.

Note that with Cauchy's integral formula we can also write $I$ in the form
\begin{align*}
	I(t) = \frac{1}{2 \pi i}\int_{t - i \infty}^{t + i \infty} S(z, t) d z,
\end{align*}
whenever $t \notin \Lambda$.

\begin{huom}
	The weight function $I$ and the analogous weight $J$ to be introduced in the convex setting are examples of weights called Peano kernels or B-splines. The properties of these kernels are discussed for example in \cite{Boo2}. To stay self-contained, we give proofs of the crucial properties used in our discussion.
\end{huom}

\begin{lem}\label{crazy_lemma}
	For $\Lambda = (\lambda_{i})_{i = 1}^{n}$ as before and $z \in \C$ outside the interval $[\min(\Lambda), \max(\Lambda)]$, we have
	\begin{align*}
		(2 n - 1)\int_{-\infty}^{\infty} \frac{I(t)}{(z - t)^{2 n}} d t = \frac{1}{p_{\Lambda}(z)^2}.
	\end{align*}
\end{lem}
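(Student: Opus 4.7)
The plan is to insert the vertical-line representation of $I(t)$, recorded just above the lemma, into the left-hand side, swap the orders of integration via Fubini, and evaluate the resulting inner integral using residues.

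Both sides of the claimed identity are analytic in $z$ on the connected domain $\mathbb{C}\setminus[\min(\Lambda), \max(\Lambda)]$, so by analytic continuation it will suffice to verify the identity for $z$ in the open upper half-plane, which I shall assume henceforth. After parametrizing the vertical line as $w = t + i\tau$, the representation becomes $I(t) = \frac{(-1)^n}{2\pi}\int_{\R}\tau^{2n-2}/p_\Lambda(t+i\tau)^2\,d\tau$. Substituting into the left-hand side and invoking Fubini --- absolute integrability of the joint integrand in $(t,\tau)$ is routine (near each singularity $(\lambda_j, 0)$ the integrand is $O(r^{2n-4})$ in the polar radius, integrable in $\mathbb{R}^2$ for $n \geq 2$; at infinity it is dominated by $|z-t|^{-2n}\tau^{-2}$) --- reduces the task to evaluating, for each $\tau$, the inner integral $\int_\R dt\,[(z-t)^{2n}p_\Lambda(t+i\tau)^2]^{-1}$.

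Fixing $\tau$, the substitution $\zeta = t + i\tau$ rewrites the inner integral as a contour integral along the horizontal line $\operatorname{Im}\zeta = \tau$, whose poles are the order-$2n$ pole at $\zeta = z + i\tau$ (above the line since $\operatorname{Im}(z)>0$) and the order-$2$ poles at the $\lambda_j$ (on the real axis). Closing in the upper half-plane, only $\zeta = z + i\tau$ is enclosed when $\tau > 0$; when $\tau < 0$, all poles are enclosed, but their residues sum to zero thanks to the $O(|\zeta|^{-4n})$ decay of the integrand at infinity. So only $\tau > 0$ contributes, and the inner integral evaluates to $\frac{2\pi i}{(2n-1)!}\frac{d^{2n-1}}{d\zeta^{2n-1}}\bigl[p_\Lambda(\zeta)^{-2}\bigr]$ at $\zeta = z + i\tau$.

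After a final substitution $\zeta = z + i\tau$, the left-hand side collapses to $-\frac{1}{(2n-2)!}\int_z^{z+i\infty}(\zeta-z)^{2n-2}\frac{d^{2n-1}}{d\zeta^{2n-1}}\bigl[p_\Lambda(\zeta)^{-2}\bigr]\,d\zeta$. I would then integrate by parts $2n-2$ times to telescope this to $-\int_z^{z+i\infty}\frac{d}{d\zeta}\bigl[p_\Lambda(\zeta)^{-2}\bigr]\,d\zeta = p_\Lambda(z)^{-2}$; the boundary terms vanish at $\zeta = z$ thanks to the surviving factor $(\zeta-z)^k$ with $k \geq 1$, and at $\zeta = z + i\infty$ because $p_\Lambda^{-2}$ and all its derivatives decay polynomially. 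The main obstacle will be careful bookkeeping of signs and multiplicative constants through the Fubini–residue argument, together with the uniform treatment of the contour closure in both sign regimes of $\tau$; once those are in hand, the identity drops out of the fundamental theorem of calculus along a vertical ray.
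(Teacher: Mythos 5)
Your proof is correct, and the constants do work out ($-\frac{1}{(2n-2)!}\int_z^{z+i\infty}(\zeta-z)^{2n-2}\frac{d^{2n-1}}{d\zeta^{2n-1}}\bigl[p_\Lambda(\zeta)^{-2}\bigr]\,d\zeta = p_\Lambda(z)^{-2}$ after the $2n-2$ integrations by parts, with all boundary terms vanishing as you say). However, your route is genuinely different from the paper's. The paper works directly with the decomposition $I(t)=\sum_{\lambda_i<t}I_i(t)$, $I_i=\Res_{w=\lambda_i}S(w,\cdot)$: it pulls the residue operation outside the $t$-integral, computes $\int_{\lambda_i}^{\infty}\frac{(w-t)^{2n-2}}{(z-t)^{2n}}\,dt$ by an elementary antiderivative (the substitution $u=\frac{w-t}{z-t}$), observes that the resulting term $\bigl(\frac{w-\lambda_i}{z-\lambda_i}\bigr)^{2n-1}$ vanishes to order $2n-1\ge 2$ at $w=\lambda_i$ and hence contributes no residue against the double pole of $p_\Lambda(w)^{-2}$, and finishes with a single application of the residue theorem to $w\mapsto (w-z)^{-1}p_\Lambda(w)^{-2}$. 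That argument is shorter, needs no Fubini over an unbounded two-dimensional domain with singularities, and works for all admissible $z$ at once. Your argument trades the vanishing-residue observation for the transparent dichotomy that only $\tau>0$ contributes and a telescoping integration by parts, at the cost of the analytic-continuation reduction to $\operatorname{Im}z>0$, a two-variable integrability check (your stated dominating bound $|z-t|^{-2n}\tau^{-2}$ only covers the large-$\tau$ regime and should be supplemented by $\tau^{2n-2}|p_\Lambda(t+i\tau)|^{-2}\lesssim (t^2+\tau^2)^{-1}$ for large $|t|$, but this is indeed routine), and heavier constant bookkeeping. Both proofs are residue-theoretic at heart; yours is a legitimate alternative, just longer.
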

\begin{proof}
	We simply compute that
	\begin{eqnarray*}
		(2 n - 1)\int_{-\infty}^{\infty} \frac{I(t)}{(z - t)^{2 n}} d t &=& (2 n - 1)\sum_{i = 1}^{n} \int_{\lambda_{i}}^{\infty}\frac{I_{i}(t)}{(z - t)^{2 n}} d t \\
		&=& -(2 n - 1)\sum_{i = 1}^{n}\Res_{w = \lambda_{i}} \int_{\lambda_{i}}^{\infty} \frac{(w - t)^{2 n - 2}}{p_{\Lambda}(w)^2 (z - t)^{2 n}} d t \\
		&=& \sum_{i = 1}^{n}\Res_{w = \lambda_{i}} \frac{(1 - \frac{z - w}{z - \lambda_{i}})^{2 n - 1} - 1}{(w - z) p_{\Lambda}(w)^2} \\
		&=& - \sum_{i = 1}^{n}\Res_{w = \lambda_{i}} \frac{1}{(w - z) p_{\Lambda}(w)^2} \\
		&=& \Res_{w = z} \frac{1}{(w - z) p_{\Lambda}(w)^2} - \frac{1}{2 \pi i}\int_{\gamma} \frac{d w}{(w - z) p_{\Lambda}(w)^2} \\
		&=& \frac{1}{p_{\Lambda}(z)^2},
	\end{eqnarray*}
	where we used the residue theorem for the function $(w \mapsto (w - z)^{-1}p_{\Lambda}(w)^{-2})$.
\end{proof}

We are then ready to formulate and prove the integral representation of the Loewner matrix.

\begin{lause}\label{crazy_formula}
	For $f \in C^{2 n - 1}(a, b)$ and $\Lambda$ as before, we have
	\begin{align*}
		L(\Lambda, f) = (2 n - 1)\int_{-\infty}^{\infty} C^{T}(t, \Lambda)M_{n}(t, f)C(t, \Lambda)I_{\Lambda}(t)d t.
	\end{align*}
\end{lause}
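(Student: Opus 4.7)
The plan is to verify the identity for the one-parameter family of resolvents $\{h_z\}$, then bootstrap to analytic functions via Cauchy's integral formula, and finally extend to general $C^{2n-1}$ functions by polynomial approximation.

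For the first step, Lemmas \ref{inverse_d} and \ref{crazy_lemma} combine almost effortlessly. Taking $f = h_z$ with $z \in \C \setminus [\min(\Lambda), \max(\Lambda)]$ and substituting the pointwise identity of Lemma \ref{inverse_d} into the proposed right-hand side, the factor $p_\Lambda(z)^2$ pulls out of the $t$-integral and cancels the $1/p_\Lambda(z)^2$ produced by Lemma \ref{crazy_lemma}, leaving exactly $L(\Lambda, h_z)$. Note that $I_\Lambda$ is supported on the compact set $[\min(\Lambda), \max(\Lambda)]$, so the integral is well defined.

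For the analytic extension, suppose $f$ is real-analytic on a complex neighborhood $U$ of $[\min(\Lambda), \max(\Lambda)]$ and let $\gamma \subset U$ be a closed curve encircling this interval. Cauchy's formula writes $f = \frac{1}{2\pi i}\int_\gamma f(z)\, h_z\, dz$, and differentiating under the integral sign expresses each entry of $M_n(t, f)$ as a Cauchy integral of the corresponding entry of $f(z) M_n(t, h_z)$; the analogous identity for $L(\Lambda, f)$ is precisely (\ref{cauchy_divided}). Multiplying the identity from Step 1 by $\frac{f(z)}{2\pi i}$, integrating along $\gamma$, and swapping the order of the $z$- and $t$-integrals then gives the theorem for such $f$. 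The swap is harmless because $\gamma$ and $\operatorname{supp}(I_\Lambda)$ are disjoint compact sets, so the integrand is jointly continuous and bounded.

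Finally, for arbitrary $f \in C^{2n-1}(a, b)$, fix a compact interval $[c, d] \subset (a, b)$ containing $[\min(\Lambda), \max(\Lambda)]$ in its interior. Approximating $f^{(2n-1)}$ uniformly on $[c, d]$ by polynomials via Weierstrass and integrating $2n-1$ times with matching initial data at $c$ produces polynomials $p_k \to f$ in the $C^{2n-1}([c, d])$ topology. Both sides of the claimed identity are continuous linear functionals of $f$ in this topology: the right-hand side because the integrand is a polynomial in the quantities $f^{(\ell)}(t)$ for $\ell \leq 2n-1$ weighted by the continuous, compactly supported $I_\Lambda$, and the left-hand side because the divided differences $[\lambda_i, \lambda_j]_f$ depend continuously on $f$ even in $C^1$. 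Passing to the limit using the analytic case completes the proof. The only real obstacle is bookkeeping in Step 2 — verifying the Fubini exchange and the differentiation under the integral sign — but with $\gamma$ at positive distance from $[\min(\Lambda), \max(\Lambda)]$ this is entirely routine.
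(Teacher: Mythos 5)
Your proposal is correct and follows essentially the same route as the paper: verify the identity on the resolvents $h_{z}$ by combining Lemmas \ref{inverse_d} and \ref{crazy_lemma}, pass to analytic $f$ via (\ref{cauchy_divided}) and Fubini, and conclude for $C^{2n-1}$ functions by Weierstrass approximation of $f^{(2n-1)}$ together with the continuity of both sides in the $C^{2n-1}$ topology. The only difference is organizational (you isolate the $h_{z}$ case as a separate step and spell out the approximation argument in more detail), which does not change the substance.
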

\begin{proof}
	For entire $f$, by Lemmas \ref{inverse_d}, \ref{crazy_lemma}, Fubini and (\ref{cauchy_divided}) we have
	\begin{eqnarray*}
		(2 n - 1)\int_{-\infty}^{\infty} C^{T}(t)M_{n}(t, f)C(t)I(t)d t &=& \frac{1}{2 \pi i} \int_{\gamma} \left((2 n - 1)\int_{-\infty}^{\infty} C^{T}(t)M_{n}(t, h_{z})C(t)I(t)d t \right) f(z) d z \\
		&=& \frac{1}{2 \pi i} \int_{\gamma} L(\Lambda, h_{z}) \left((2 n - 1)\int_{-\infty}^{\infty}  \frac{p_{\Lambda}(z)^2}{(z - t)^{2 n}}I(t)d t \right) f(z) d z \\
		&=& \frac{1}{2 \pi i} \int_{\gamma} L(\Lambda, h_{z}) f(z) d z \\
		&=& L(\Lambda, f).
	\end{eqnarray*}
	The general case now follows by uniformly approximating $f$ and its derivatives up to order $(2 n - 1)$ by entire functions on $[\min(\Lambda), \max(\Lambda)]$, say, by polynomials with a suitable application of Weierstrass approximation theorem.
\end{proof}

\subsection{Positivity of the weight}

In this section we prove the non-negativity of the weight function $I$ introduced in the previous section. We begin with a simple lemma.

\begin{lem}\label{simple_lemma}
	Let $n$ be a positive integer and numbers $Z = (\zeta_{i})_{i = 1}^{n}$ non-negative. Now if $f(t) = \prod_{i = 1}^{n}(\zeta_{i} - t)^{-1}$, then for any non-negative integer $k$ and $t < 0$ we have
	\begin{align*}
		f^{(k)}(t) \geq 0.
	\end{align*}
\end{lem}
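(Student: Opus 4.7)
The plan is a straightforward induction on $n$, exploiting the fact that the factors $(\zeta_i - t)^{-1}$ are all \emph{absolutely monotonic} (have non-negative derivatives of every order) on $(-\infty, \zeta_i)$, and that a product of two such functions remains absolutely monotonic by the Leibniz rule.

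First I would handle the base case $n = 1$ by direct computation: for $f(t) = (\zeta_1 - t)^{-1}$ one has $f^{(k)}(t) = k! \, (\zeta_1 - t)^{-k-1}$, which is manifestly non-negative on any interval where $\zeta_1 - t > 0$, in particular on $(-\infty, 0)$ since $\zeta_1 \geq 0$.

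For the inductive step, assume the statement for $n - 1$ and write
\begin{align*}
    f(t) = g(t) \, h(t), \quad g(t) = \prod_{i = 1}^{n - 1}(\zeta_i - t)^{-1}, \quad h(t) = (\zeta_n - t)^{-1}.
\end{align*}
By the induction hypothesis, $g^{(j)}(t) \geq 0$ for every $j \geq 0$ and every $t < 0$; by the base case the same holds for $h$. The general Leibniz rule then gives
\begin{align*}
    f^{(k)}(t) = \sum_{j = 0}^{k} \binom{k}{j} g^{(j)}(t) \, h^{(k - j)}(t),
\end{align*}
which is a non-negative combination of products of non-negative numbers and hence non-negative. The main obstacle is essentially non-existent: this is a routine consequence of the positivity of derivatives of a single factor together with the Leibniz rule, and the non-negativity hypothesis on the $\zeta_i$ is used only to guarantee $\zeta_i - t > 0$ throughout the range $t < 0$.
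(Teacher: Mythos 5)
Your proof is correct and follows exactly the route the paper takes: the paper's proof is a one-line remark that the case $n=1$ is trivial and the general case follows from the product rule, which is precisely your induction via the Leibniz rule. You have simply written out the details that the paper leaves implicit.
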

\begin{proof}
	The case of $n = 1$ is trivial; the general case follows now immediately from the product rule.
\end{proof}

\begin{lem}\label{mon_pos}
	For $\Lambda$ as before, $I_{\Lambda}$ is non-negative.
\end{lem}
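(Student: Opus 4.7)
The plan is to identify $I_{\Lambda}$, up to a universal constant, with the \emph{B-spline} (or Peano kernel) associated to the confluent divided difference at the doubled node tuple $(\lambda_1,\lambda_1,\lambda_2,\lambda_2,\ldots,\lambda_n,\lambda_n)$. Since B-splines arise as pushforwards of non-negative densities on simplices, this will immediately yield $I_{\Lambda}(t)\geq 0$.

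First, I would invoke the Hermite--Genocchi formula, extended by continuity to coinciding nodes, which gives
\begin{align*}
[\lambda_1,\lambda_1,\lambda_2,\lambda_2,\ldots,\lambda_n,\lambda_n]_{f} = \int_{\Delta_{2n-1}} f^{(2n-1)}\!\left(\sum_{j=1}^{n}\lambda_{j}(s_{2j-1}+s_{2j})\right) ds
\end{align*}
for any $f\in C^{2n-1}$ on a neighborhood of $[\min\Lambda,\max\Lambda]$, where $\Delta_{2n-1}$ is the standard $(2n-1)$-simplex equipped with Lebesgue surface measure. Substituting $u_{j}=s_{2j-1}+s_{2j}$ and integrating over the fibers yields a density proportional to $\prod_{j}u_{j}$ on $\Delta_{n-1}$, and pushing this forward along $t=\sum_{j}\lambda_{j}u_{j}$ produces an explicit non-negative, compactly supported, continuous function $B_{\Lambda}$ satisfying
\begin{align*}
[\lambda_1,\lambda_1,\ldots,\lambda_n,\lambda_n]_{f} = \int f^{(2n-1)}(t)\,B_{\Lambda}(t)\,dt.
\end{align*}

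Next, I would apply this identity to $f=h_{z}$, using $h_{z}^{(2n-1)}(t) = (2n-1)!/(z-t)^{2n}$, and simultaneously evaluate the left-hand side by Cauchy's formula (\ref{cauchy_divided}) to obtain $[\lambda_1,\lambda_1,\ldots,\lambda_n,\lambda_n]_{h_{z}} = 1/p_{\Lambda}(z)^{2}$. Combining this with Lemma \ref{crazy_lemma} yields
\begin{align*}
(2n-1)\int\frac{I_{\Lambda}(t)}{(z-t)^{2n}}\,dt = \frac{1}{p_{\Lambda}(z)^{2}} = (2n-1)!\int\frac{B_{\Lambda}(t)}{(z-t)^{2n}}\,dt
\end{align*}
for every $z$ outside $[\min\Lambda,\max\Lambda]$.

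Finally, $I_{\Lambda}$ and $(2n-2)!\,B_{\Lambda}$ are continuous functions with a common compact support and identical Cauchy transforms of order $2n$ on the exterior. Expanding both integrals in powers of $1/z$ at infinity and matching coefficients shows that every polynomial moment of the difference $I_{\Lambda}-(2n-2)!\,B_{\Lambda}$ vanishes, so the difference itself vanishes by Weierstrass approximation. Thus $I_{\Lambda}=(2n-2)!\,B_{\Lambda}\geq 0$. The main obstacle I anticipate is the confluent Hermite--Genocchi step: the simplex pushforward has to be controlled when nodes coincide in pairs. This can be handled by continuity from the pairwise distinct case, but the explicit Jacobian computation producing the factor $\prod_{j}u_{j}$ needs careful bookkeeping; the subsequent Cauchy-transform comparison and moment matching are routine.
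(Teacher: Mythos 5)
Your proposal is correct, but it reaches the conclusion by a genuinely different route than the paper. The paper's proof is pointwise: it fixes $t$ (normalized to $0$), writes $I_{\Lambda}(0)$ as the contour integral $-\frac{1}{2\pi i}\int_{-i\infty}^{i\infty} z^{2n-2}p_{\Lambda}(z)^{-2}\,dz$, performs the M\"obius substitution $w = 1/z$ to turn this into $\frac{1}{2\pi i}\int_{-i\infty}^{i\infty} p_{Z}(w)^{-2}\,dw$ with $Z = 1/\Lambda$, recognizes the result as the confluent divided difference $[\zeta_1,\zeta_1,\ldots,\zeta_k,\zeta_k]_{f}$ of $f(t)=\bigl(\prod_{i>k}(t-\zeta_i)\bigr)^{-2}$ taken over the negative nodes only, and concludes via the mean-value form (\ref{mean_value}) together with the elementary sign Lemma \ref{simple_lemma}. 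You instead identify $I_{\Lambda}$ globally, up to a positive constant, with the B-spline of the doubled node sequence: the Hermite--Genocchi simplex representation supplies non-negativity of that kernel, the residue computation $[\lambda_1,\lambda_1,\ldots,\lambda_n,\lambda_n]_{h_z} = p_{\Lambda}(z)^{-2}$ matches its Cauchy transform against the one for $I_{\Lambda}$ furnished by Lemma \ref{crazy_lemma}, and uniqueness of a continuous compactly supported function with prescribed moments closes the argument. Both proofs ultimately rest on recognizing a confluent divided difference at doubled nodes, but yours imports the Hermite--Genocchi machinery that the paper deliberately avoids (cf.\ the remark on Peano kernels, where precisely this identification is mentioned but not proved), while the paper's argument stays elementary and self-contained. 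What your route buys is an explicit closed-form identification of $I_{\Lambda}$ as a B-spline, which is more information than mere non-negativity. The steps you flag as delicate are indeed the only ones needing care: the fiber volume $\propto \prod_j u_j$ is correct since the fibers are products of segments of lengths $u_j$, the resulting density vanishes on the boundary of the simplex so the pushforward is continuous (needed for the Weierstrass step), and any slip in the normalization of the simplex measure is harmless because the Cauchy-transform comparison pins down the proportionality constant, whose positivity is all that non-negativity requires.
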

\begin{proof}
	We may clearly assume that $\Lambda$ is strictly increasing. When checking the non-negativity at a point $t$, we may without loss of generality assume that $t = 0 \in [\lambda_{1}, \lambda_{n}]$. Also by continuity we may further assume that all the $\lambda_{i}$`s are non-zero. We are left to investigate
	\begin{align*}
		\frac{1}{2 \pi i}\int_{-i \infty}^{i \infty} S(z, 0) d z = -\frac{1}{2 \pi i}\int_{-i \infty}^{i \infty} \frac{z^{2 n - 2} d z}{p_{\Lambda}(z)^{2}}.
	\end{align*}
	Making the change of variable $w = \frac{1}{z}$, we are to check that
	\begin{align*}
		\frac{1}{2 \pi i}\int_{-i \infty}^{i \infty} \frac{d w}{p_{Z}(w)^{2}} \geq 0,
	\end{align*}
	where $Z = \frac{1}{\Lambda}$, that is $\zeta_{i} = \frac{1}{\lambda_{i}}$.

	Let $k$ $(< n)$ be the number of the negative $\zeta_{i}$`s and denote $Z_{-} = (\zeta_{i})_{i = 1}^{k}$. Note that if we further write $f(t) = \left(\prod_{i > k}(t - \zeta_{i})\right)^{-2}$, we have by suitable variant of (\ref{cauchy_divided})
	\begin{align*}
		\frac{1}{2 \pi i}\int_{-i \infty}^{i \infty} \frac{d w}{p_{Z}(w)^{2}} = \frac{1}{2 \pi i}\int_{-i \infty}^{i \infty}\frac{f(w) d w}{p_{Z_{-}}(w)^{2}} = [\zeta_{1}, \zeta_{1}, \zeta_{2}, \zeta_{2}, \ldots, \zeta_{k}, \zeta_{k}]_{f},
	\end{align*}
	which is positive in the view of (\ref{mean_value}) and Lemma \ref{simple_lemma}.
\end{proof}

\subsection{Characterizations for the matrix monotonicity}

\begin{proof}[Proof of Theorem \ref{hankel_mon}]
	The necessity of the condition can be found in \cite{Dob}. For sufficiency note that by Theorem \ref{crazy_formula} we can write
	\begin{align*}
		L(\Lambda) = (2 n - 1)\int_{-\infty}^{\infty} C^{T}(t)M(t)C(t)I(t)d t
	\end{align*}
	Now if $M(t) \geq 0$ for any $t \in (a, b)$, also $C^{T}(t)M(t)C(t) \geq 0$ for any $t \in (a, b)$. It follows from Lemma \ref{mon_pos} that the integrand is a positive matrix, so indeed, $L$ is positive as an integral of positive matrices. But now $f$ is $n$-monotone by Theorem \ref{basic_mon}.
\end{proof}

Putting everything together we obtain complete characterizations of the class of $n$-monotone functions.

\begin{lause}[Loewner, Dobsch, Donoghue]\label{loewner-dobsch-donoghue}
Let $n \geq 2$, and $(a, b)$ be an open interval. Now for $f : (a, b) \to \R$ the following are equivalent
\begin{enumerate}[(i)]
	\item $f$ is $n$-monotone.
	\item $f \in C^{1}(a, b)$ and the Loewner matrix $L(\Lambda, f)$ is positive for any tuple $\Lambda \in (a, b)^{n}$.
	\item $f \in C^{2 n - 3}(a, b)$, $f^{(2 n - 3)}$ is convex, and the Hankel matrix $M_{n}(t, f)$, which makes sense almost everywhere, is positive for almost every $t \in (a, b)$.
\end{enumerate}
\end{lause}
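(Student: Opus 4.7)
The equivalence (i) $\iff$ (ii) is precisely Theorem \ref{basic_mon}, so the task reduces to tying (iii) into the picture. My plan is a mollification argument that uses Theorem \ref{hankel_mon} as a black box for the smooth case and transfers information between $f$ and its regularizations $f_\varepsilon$.

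For (iii) $\implies$ (i), the convexity of $f^{(2n-3)}$ forces $f^{(2n-2)}$ and $f^{(2n-1)}$ to exist almost everywhere and to coincide a.e.\ with the respective distributional derivatives, so that $M_n(t, f)$ is defined almost everywhere and satisfies the entry-wise convolution identity
\begin{align*}
    M_n(t, f_\varepsilon) = \int_{-\infty}^{\infty} M_n(t - \varepsilon y, f)\, \phi(y) \, dy
\end{align*}
on $(a+\varepsilon, b-\varepsilon)$. Because $\phi \geq 0$ and $M_n(\cdot, f) \geq 0$ almost everywhere, this integral is positive semidefinite for every such $t$, and Theorem \ref{hankel_mon} then certifies $f_\varepsilon$ as $n$-monotone on the smaller interval. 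Passing to the limit by means of the local uniform convergence $f_\varepsilon \to f$ (available since $f \in C^{2n-3}$) combined with continuity of the functional calculus will yield $n$-monotonicity of $f$ on $(a,b)$.

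For (i) $\implies$ (iii), I will invoke the classical regularity statement (due to Loewner and Dobsch; see \cite{Don}) that every $n$-monotone function belongs to $C^{2n-3}(a,b)$ with convex $(2n-3)$-th derivative, rendering $M_n(t, f)$ well-defined almost everywhere. Since mollification respects $n$-monotonicity as a non-negative average of translates, $f_\varepsilon$ is smooth and $n$-monotone on $(a+\varepsilon, b-\varepsilon)$. Theorem \ref{hankel_mon} then delivers $M_n(t, f_\varepsilon) \geq 0$ pointwise there, and entry-wise convergence $M_n(t, f_\varepsilon) \to M_n(t, f)$ at Lebesgue points of $f^{(2n-1)}$ (hence at a.e.\ $t$) concludes that (iii) holds.

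The main technical point I anticipate is justifying the mollification identity for the top-order entries involving $f^{(2n-2)}$ and $f^{(2n-1)}$, which exist only almost everywhere. This is exactly where the convexity of $f^{(2n-3)}$ is indispensable: it turns these into classical derivatives of a convex function on a full-measure set, so that integration by parts against the smooth, compactly supported $\phi$ is legitimate and $(f_\varepsilon)^{(k)}$ for $k = 2n-2, 2n-1$ is genuinely the ordinary mollification of $f^{(k)}$, making the convolution identity rigorous.
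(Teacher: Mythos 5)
Your proposal follows essentially the same route as the paper: the paper also disposes of (i) $\Leftrightarrow$ (ii) by Loewner's theorem and reduces (i) $\Leftrightarrow$ (iii) to the smooth case (Theorem \ref{hankel_mon}) via a ``standard regularization procedure,'' deferring the details to Donoghue's monograph \cite{Don}; you are supplying exactly those details. One point in your write-up is not literally correct, though the argument survives. Convexity of $f^{(2n-3)}$ only gives that the distributional derivative of the nondecreasing function $f^{(2n-2)}$ is a nonnegative measure $\mu = f^{(2n-1)}\,dt + \mu_{s}$ with $\mu_{s} \geq 0$ possibly singular (think of $f^{(2n-3)}(x)=|x|$), so
\begin{align*}
(f_\varepsilon)^{(2n-1)}(t) \;=\; (\phi_\varepsilon * \mu)(t) \;\geq\; \int_{-\infty}^{\infty} f^{(2n-1)}(t-\varepsilon y)\,\phi(y)\,dy,
\end{align*}
with equality failing whenever $\mu_{s} \neq 0$; your claimed entry-wise convolution identity and the assertion that $(f_\varepsilon)^{(2n-1)}$ is ``genuinely the ordinary mollification of $f^{(2n-1)}$'' therefore fail for the $(n,n)$ entry. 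The repair is immediate: the discrepancy is a nonnegative number added to a single diagonal entry, so $M_n(t,f_\varepsilon)$ still dominates the (positive) convolution of $M_n(\cdot,f)$ and positivity is preserved, which is all that (iii) $\Rightarrow$ (i) needs. Symmetrically, in (i) $\Rightarrow$ (iii) the convergence of the $(n,n)$ entry requires not merely Lebesgue points of $f^{(2n-1)}$ but points where the symmetric derivative of $\mu_{s}$ vanishes; this still holds almost everywhere, so your conclusion is unaffected. The remaining ingredients --- the limit passage via local uniform convergence and continuity of the functional calculus, and the appeal to the classical regularity of $n$-monotone functions for (i) $\Rightarrow$ (iii) --- are correct and are exactly what the paper's citation of \cite{Don} is meant to cover.
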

\begin{proof}
	As noted before, $(i) \Leftrightarrow (ii)$ was proven in the original paper of Loewner \cite{Low}. For $C^{2 n - 1}$ functions, $(i) \Leftrightarrow (iii)$ is Theorem \ref{hankel_mon}, and for merely $C^{2 n - 3}$ functions the claim follows from standard regularization procedure, details of which can be found in \cite{Don}. For an alternate approach to the latter equivalence, see again \cite{Don}.
\end{proof}

\begin{kor}\label{mon_loc}
	For any positive integer $n$, $n$-monotonicity is a local property.
\end{kor}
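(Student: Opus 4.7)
The plan is to deduce the corollary directly from Theorem \ref{loewner-dobsch-donoghue}, whose characterization (iii) is manifestly local: the regularity assumption, the convexity of $f^{(2n-3)}$, and the positivity of the Hankel matrix $M_n(t,f)$ are all conditions that depend only on the germ of $f$ at individual points.

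Concretely, suppose $f:(a,b)\to\R$ is $n$-monotone on two overlapping subintervals $(a,c)$ and $(d,b)$ with $a<d<c<b$, whose union is all of $(a,b)$. Applying Theorem \ref{loewner-dobsch-donoghue} on each piece, I get that $f\in C^{2n-3}(a,c)\cap C^{2n-3}(d,b)$, with $f^{(2n-3)}$ convex on each of these subintervals, and the Hankel matrix $M_n(t,f)$ positive for almost every $t$ in each subinterval. Since $C^{2n-3}$-regularity is defined pointwise, $f\in C^{2n-3}(a,b)$; since $M_n(t,f)\geq 0$ holds at almost every $t\in (a,c)\cup(d,b)=(a,b)$, the Hankel condition carries over to the full interval.

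The only mildly nontrivial step is observing that convexity itself is a local property: if $f^{(2n-3)}$ is convex on $(a,c)$ and on $(d,b)$, then it is convex on $(a,b)$. This is a standard fact — for instance, at any point $t\in(d,c)$ one-sided derivatives of $f^{(2n-3)}$ computed from the two subintervals agree and are monotone, and the inequality $f^{(2n-3)}(\lambda x+(1-\lambda)y)\leq \lambda f^{(2n-3)}(x)+(1-\lambda)f^{(2n-3)}(y)$ for $x<t<y$ with $x$ close to $a$ and $y$ close to $b$ follows by iterating the one-sided convexity inequalities through an intermediate point in $(d,c)$.

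Having verified all three conditions of (iii) on $(a,b)$, Theorem \ref{loewner-dobsch-donoghue} then yields that $f$ is $n$-monotone on $(a,b)$, which is exactly the local-property statement. I anticipate no real obstacle here; the entire weight of the argument has already been absorbed into Theorem \ref{loewner-dobsch-donoghue}, and the corollary is essentially just a reading of that theorem through the lens of pointwise/local conditions.
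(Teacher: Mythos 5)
Your proposal is correct and matches the paper's intent exactly: the corollary is stated without proof precisely because it is an immediate reading of the manifestly local condition (iii) of Theorem \ref{loewner-dobsch-donoghue}, which is what you carry out. The only detail you add beyond the paper is the (standard, and correct) observation that convexity of $f^{(2n-3)}$ glues across overlapping intervals.
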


\section{Matrix convex functions}

\subsection{Integral representation}

In this section we construct the integral representations for the Kraus matrices alluded to in the introduction.

Again, let $n \geq 2$, $(a, b)$ be an interval, and $\Lambda = (\lambda_{i})_{i = 1}^{n} \in (a, b)^{n}$ be an arbitrary sequence of distinct points in $(a, b)$.

In the following the Kraus and the respective Hankel matrices, introduced in the introduction, for sufficiently smooth $f : (a, b) \to \R$ and $\lambda_{0} \in (a, b)$ are denoted by $Kr(\lambda_{0}, \Lambda, f)$ and $K_{n}(t, f)$, respectively.

The integral representation for the Kraus matrix is similar to that of the Loewner matrix. Fix again $n \geq 2$, open interval $(a, b)$ and $\Lambda = (\lambda_{i})_{i = 1}^{n} \in (a, b)^{n}$, an arbitrary sequence of distinct points on $(a, b)$. For fixed $\lambda_{0} \in (a, b)$ the weights $J_{i, \lambda_{0}} := J_{i, \lambda_{0}, \Lambda}$, now for $0 \leq i \leq n$, are defined analogously as the residues at $\lambda_{i}$'s of
\begin{align*}
T_{\lambda_{0}}(z, t) := T_{\lambda_{0}, \Lambda}(z, t) := -\frac{(z - t)^{2 n - 1}}{(z - \lambda_{0}) p_{\Lambda}(z)^{2}}
\end{align*}
and
\begin{align*}
	J_{\lambda_{0}}(t) := J_{\lambda_{0}, \Lambda}(t) := \sum_{\atop{0 \leq i \leq n}{\lambda_{i} < t}}J_{i, \lambda_{0}}(t).
\end{align*}

\begin{lem}\label{crazy_convex_lemma}
	For $\Lambda = (\lambda_{i})_{i = 1}^{n}$, as before, $\lambda_{0} \in (a, b)$ and $z \in \C$ outside the interval $[\min(\Lambda), \max(\Lambda)]$, we have
	\begin{align*}
		2 n\int_{-\infty}^{\infty} \frac{J_{\lambda_{0}}(t)}{(z - t)^{2 n + 1}} d t = \frac{1}{(z - \lambda_{0})p_{\Lambda}(z)^2}.
	\end{align*}
\end{lem}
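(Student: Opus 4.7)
The plan is to mirror the proof of Lemma \ref{crazy_lemma} closely. First I would unfold the definition of $J_{\lambda_0}$ and commute the integral with the residue operator to reduce to
\begin{align*}
	2 n \int_{-\infty}^{\infty} \frac{J_{\lambda_{0}}(t)}{(z - t)^{2 n + 1}} d t = -\sum_{i = 0}^{n}\Res_{w = \lambda_{i}} \frac{1}{(w - \lambda_{0}) p_{\Lambda}(w)^{2}} \cdot 2 n \int_{\lambda_{i}}^{\infty}\frac{(w - t)^{2 n - 1}}{(z - t)^{2 n + 1}}\, d t,
\end{align*}
assuming for now $\lambda_0 \notin \Lambda$ (the general case follows by continuity).

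Next I would evaluate the inner $t$-integral by spotting the antiderivative: since
\begin{align*}
	\frac{d}{d t}\left[\frac{(w - t)^{2 n}}{(z - t)^{2 n}}\right] = \frac{2 n (w - z) (w - t)^{2 n - 1}}{(z - t)^{2 n + 1}},
\end{align*}
one obtains
\begin{align*}
	2 n \int_{\lambda_{i}}^{\infty} \frac{(w - t)^{2 n - 1}}{(z - t)^{2 n + 1}} d t = \frac{1 - \left(\frac{w - \lambda_{i}}{z - \lambda_{i}}\right)^{2 n}}{w - z}.
\end{align*}
Substituting this back, the contribution from the term $\bigl(\frac{w - \lambda_i}{z - \lambda_i}\bigr)^{2 n}$ vanishes upon taking the residue at $\lambda_i$: for $1 \leq i \leq n$ the factor $p_{\Lambda}(w)^{2}$ contributes at most a double pole and $(w - \lambda_i)^{2n}$ kills it (since $2n \geq 2$), while for $i = 0$ only the simple pole from $(w - \lambda_0)$ must be killed, which is again comfortably achieved.

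What remains is
\begin{align*}
	2 n \int_{-\infty}^{\infty} \frac{J_{\lambda_{0}}(t)}{(z - t)^{2 n + 1}} d t = -\sum_{i = 0}^{n}\Res_{w = \lambda_{i}} \frac{1}{(w - z)(w - \lambda_{0}) p_{\Lambda}(w)^{2}}.
\end{align*}
The rational function $\frac{1}{(w - z)(w - \lambda_0) p_\Lambda(w)^2}$ has denominator of degree $2n + 2 \geq 4$, so the sum of all its residues vanishes. Its only other pole is at $w = z$, with residue $\frac{1}{(z - \lambda_0) p_\Lambda(z)^2}$, so the sum over $\lambda_0, \ldots, \lambda_n$ equals the negative of this, and the two minus signs cancel to yield the claimed identity.

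The main obstacle, such as it is, is bookkeeping: verifying that the $(w - \lambda_i)^{2n}$ factor is indeed strong enough to annihilate every residue in sight, in particular the one at the extra point $\lambda_0$ which has no analog in the monotone setting. Apart from this, the argument is a direct adaptation of Lemma \ref{crazy_lemma} with the exponent in the antiderivative shifted from $2n - 1$ to $2n$.
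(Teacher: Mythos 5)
Your proof is correct and follows exactly the route the paper intends: its own proof of this lemma simply says to repeat the argument of Lemma \ref{crazy_lemma} with the map $w \mapsto (w - z)^{-1}(w - \lambda_{0})^{-1}p_{\Lambda}(w)^{-2}$, which is precisely what you do, with the antiderivative exponent shifted to $2n$ and the extra simple pole at $\lambda_{0}$ tracked correctly. Your explicit treatment of the degenerate case $\lambda_{0} \in \Lambda$ by continuity is a detail the paper leaves implicit.
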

\begin{proof}
	Proof is almost identical to that of Lemma \ref{crazy_lemma}; we just perform the residue trick with the map $(w \mapsto (w - z)^{-1}(w - \lambda_{0})^{-1}p_{\Lambda}(w)^{-2})$ instead.
\end{proof}

\begin{lause}\label{convex_crazy_formula}
	For $f \in C^{2 n}(a, b)$, $\Lambda$ as before, and $\lambda_{0} \in (a, b)$, we have
	\begin{align*}
		Kr(\lambda_{0}, \Lambda, f) = 2 n \int_{-\infty}^{\infty} C^{T}(t, \Lambda)K_{n}(t, f)C(t, \Lambda)J_{\lambda_{0}, \Lambda}(t) d t.
	\end{align*}
\end{lause}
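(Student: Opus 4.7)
The plan is to follow the proof of Theorem~\ref{crazy_formula} line by line, replacing Lemma~\ref{inverse_d} with a Kraus analog and Lemma~\ref{crazy_lemma} with the already-proved Lemma~\ref{crazy_convex_lemma}. As in the monotone case, the core computation is carried out for the resolvent $h_{z}$, and the general statement follows by Cauchy's formula, Fubini, and Weierstrass approximation.

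The first step is to establish the Kraus analog of Lemma~\ref{inverse_d}:
\begin{align*}
C^{T}(t, \Lambda) K_{n}(t, h_{z}) C(t, \Lambda) = Kr(\lambda_{0}, \Lambda, h_{z}) \cdot \frac{(z - \lambda_{0}) p_{\Lambda}(z)^{2}}{(z - t)^{2 n + 1}}.
\end{align*}
The left-hand side is computed exactly as in Lemma~\ref{inverse_d}: since $h_{z}^{(k)}(t)/k! = (z - t)^{-k - 1}$, one has $K_{n}(t, h_{z}) = (z - t)^{-3} v v^{T}$ with $v = (1, (z - t)^{-1}, \ldots, (z - t)^{-(n - 1)})^{T}$, and the components $(C^{T} v)_{i} = g_{i}(t, (z - t)^{-1}) = (z - t)^{-(n - 1)} \prod_{k \neq i}(z - \lambda_{k})$ give
\begin{align*}
\left(C^{T}(t, \Lambda) K_{n}(t, h_{z}) C(t, \Lambda)\right)_{i, j} = \frac{\prod_{k \neq i}(z - \lambda_{k}) \prod_{k \neq j}(z - \lambda_{k})}{(z - t)^{2 n + 1}}.
\end{align*}
For the right-hand side, the residue theorem applied to $(z - w)^{-1}(w - \lambda_{i})^{-1}(w - \lambda_{j})^{-1}(w - \lambda_{0})^{-1}$ yields the closed form $[\lambda_{i}, \lambda_{j}, \lambda_{0}]_{h_{z}} = ((z - \lambda_{i})(z - \lambda_{j})(z - \lambda_{0}))^{-1}$, and the desired identity follows after multiplying the numerator and denominator of the entry above by $(z - \lambda_{i})(z - \lambda_{j})(z - \lambda_{0})$.

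With the identity in hand, the remaining argument is essentially a copy of the proof of Theorem~\ref{crazy_formula}. For $f$ entire, (\ref{cauchy_divided}) applied to the three-point divided differences that form the entries of $Kr(\lambda_{0}, \Lambda, f)$, combined with Fubini and the identity above, rewrites $2 n \int C^{T}(t, \Lambda) K_{n}(t, f) C(t, \Lambda) J_{\lambda_{0}, \Lambda}(t)\, d t$ as
\begin{align*}
\frac{1}{2 \pi i}\int_{\gamma} Kr(\lambda_{0}, \Lambda, h_{z}) (z - \lambda_{0}) p_{\Lambda}(z)^{2} \left(2 n \int_{-\infty}^{\infty}\frac{J_{\lambda_{0}, \Lambda}(t)}{(z - t)^{2 n + 1}} d t\right) f(z)\, d z;
\end{align*}
by Lemma~\ref{crazy_convex_lemma} the inner integral collapses the bracketed factor to $1$, and the outer integral reduces to $Kr(\lambda_{0}, \Lambda, f)$ by another application of (\ref{cauchy_divided}). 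The passage from entire $f$ to $f \in C^{2 n}(a, b)$ is handled exactly as in Theorem~\ref{crazy_formula}: approximate $f$ together with its derivatives up to order $2 n$ uniformly on $[\min(\Lambda \cup \{\lambda_{0}\}), \max(\Lambda \cup \{\lambda_{0}\})]$ by polynomials and pass to the limit, using the compact support of $J_{\lambda_{0}, \Lambda}$ and the polynomial nature of $C(t, \Lambda)$.

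The only genuinely new element compared with the monotone argument is the Kraus analog of Lemma~\ref{inverse_d}; the (minor) obstacle is to keep the bookkeeping straight, so that the extra factor $z - \lambda_{0}$ in the definition of $T_{\lambda_{0}}$ exactly cancels the factor $z - \lambda_{0}$ produced by $Kr(\lambda_{0}, \Lambda, h_{z})$, and the power of $z - t$ shifts correctly from $2 n$ (monotone) to $2 n + 1$ (convex) on account of the extra derivative in $K_{n}$ versus $M_{n}$. Once these match, the entire proof is a direct transcription of that of Theorem~\ref{crazy_formula}.
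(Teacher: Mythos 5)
Your proposal is correct and follows essentially the same route as the paper: the paper's (very terse) proof observes that $K_{n}(t, h_{z}) = \frac{1}{z - t} M_{n}(t, h_{z})$ and then reuses Lemma \ref{inverse_d} and the argument of Theorem \ref{crazy_formula} with Lemma \ref{crazy_convex_lemma}, which yields exactly the conjugation identity $C^{T} K_{n}(t, h_{z}) C = Kr(\lambda_{0}, \Lambda, h_{z})(z - \lambda_{0})p_{\Lambda}(z)^{2}(z - t)^{-2n - 1}$ that you derive from scratch. Your entrywise computation and the subsequent Cauchy--Fubini--Weierstrass steps all check out.
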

\begin{proof}
	After noting that $K_{n}(t, h_{z}) = \frac{1}{z - t} M_{n}(t, h_{z})$, the calculation is carried out as in the proof of Theorem \ref{crazy_formula}, using Lemma \ref{crazy_convex_lemma} instead of Lemma \ref{crazy_lemma}.
\end{proof}

\subsection{Positivity of the weight}

\begin{lem}\label{con_pos}
	For $\Lambda = (\lambda_{i})_{i = 1}^{n}$ as before and $\lambda_{0} \in (a, b)$, $J_{\lambda_{0}, \Lambda}$ is non-negative.
\end{lem}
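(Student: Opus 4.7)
The plan is to mimic the proof of Lemma \ref{mon_pos} step by step, reducing positivity of $J_{\lambda_0,\Lambda}$ at a point to a divided-difference inequality handled by Lemma \ref{simple_lemma}. By continuity it suffices to verify $J_{\lambda_0,\Lambda}(0) \geq 0$ under the assumption that $\Lambda$ is strictly increasing and every element of $\Lambda \cup \{\lambda_0\}$ is nonzero. Starting from the contour representation
\begin{align*}
J_{\lambda_0,\Lambda}(0) = \frac{1}{2\pi i}\int_{-i\infty}^{i\infty} T_{\lambda_0}(z,0)\,dz = -\frac{1}{2\pi i}\int_{-i\infty}^{i\infty}\frac{z^{2n-1}}{(z-\lambda_0)\,p_{\Lambda}(z)^{2}}\,dz,
\end{align*}
I would make the substitution $w = 1/z$, writing $\zeta_i = 1/\lambda_i$ for $0 \leq i \leq n$ and $Z = (\zeta_i)_{i=1}^n$; a routine calculation analogous to the one in Lemma \ref{mon_pos} yields
\begin{align*}
J_{\lambda_0,\Lambda}(0) = \frac{-1}{\lambda_0\,(\lambda_1\cdots\lambda_n)^2}\cdot\frac{1}{2\pi i}\int_{-i\infty}^{i\infty}\frac{dw}{(w-\zeta_0)\,p_{Z}(w)^{2}}.
\end{align*}

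Closing the $w$-contour to the left, let $N = \{ i \geq 1 : \lambda_i < 0 \}$ and $k = |N|$; I would then split on the sign of $\lambda_0$. If $\lambda_0 > 0$, the pole at $\zeta_0$ lies outside the contour, so by (\ref{cauchy_divided}) the integral equals the divided difference at the doubled node sequence $(\zeta_i,\zeta_i)_{i \in N}$ of $g(t) = \big((t-\zeta_0)\prod_{i \geq 1,\,i \notin N}(t-\zeta_i)^2\big)^{-1}$. By (\ref{mean_value}) this is $g^{(2k-1)}(\xi)/(2k-1)!$ for some $\xi < 0$, and since $g = -h$ with $h$ of the form appearing in Lemma \ref{simple_lemma} (all relevant $\zeta$'s being positive), $g^{(2k-1)}(\xi) \leq 0$. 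The prefactor is also negative, so the product is non-negative. If $\lambda_0 < 0$, the pole at $\zeta_0$ is instead enclosed as an additional simple node, giving the divided difference of $\tilde g(t) = \prod_{i \geq 1,\, i \notin N}(t-\zeta_i)^{-2}$ at the $(2k+1)$-tuple $(\zeta_0,(\zeta_i,\zeta_i)_{i \in N})$; by Lemma \ref{simple_lemma} (directly applicable since $\tilde g$ has only positive $\zeta$'s) this equals $\tilde g^{(2k)}(\xi)/(2k)! \geq 0$ at some $\xi < 0$, while the prefactor is now positive.

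The main source of friction is sign bookkeeping: the substitution $w = 1/z$ introduces a factor $-\lambda_0$ in the prefactor, whose sign is precisely cancelled by whether $\zeta_0$ contributes an additional residue---equivalently, by the parity shift in the order of the derivative produced by the mean value theorem. Once the case split on the sign of $\lambda_0$ is in place, the two cases run in parallel to Lemma \ref{mon_pos}.
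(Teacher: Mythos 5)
Your proof is correct and follows essentially the same route as the paper: reduce to $t=0$, invert via $w=1/z$, recognize the contour integral as a divided difference at doubled negative nodes, and conclude with the mean value property (\ref{mean_value}) and Lemma \ref{simple_lemma}. The only difference is organizational: where you split into cases on the sign of $\lambda_{0}$ and track the sign of the prefactor through each branch, the paper disposes of the case $\zeta_{0}<0$ at the outset by the reflection $Z\mapsto -Z$, $\lambda_0 \mapsto -\lambda_0$, so that only your first case needs to be carried out.
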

\begin{proof}
	As in the proof of Lemma \ref{mon_pos}, we can assume that $t = 0$ is our point of inspection and that $\Lambda$ is strictly increasing. We also make the same change of variables $Z = \frac{1}{\Lambda}$. Note that we may well assume that $\zeta_{0} > 0$, since the other case would follow by reflecting the variables, that is considering the sequence $-Z$ and $-\lambda_{0}$, instead. Now the inequality is reduced to an equivalent form
	\begin{align*}
		\frac{1}{2 \pi i}\int_{-i \infty}^{i \infty} \frac{d w}{(\zeta_{0} - w)p_{Z}(w)^{2}} \geq 0.
	\end{align*}
	But as in the proof of Lemma \ref{mon_pos}, the left hand side can be again written as
	\begin{align*}
		[\zeta_{1}, \zeta_{1}, \zeta_{2}, \zeta_{2}, \ldots, \zeta_{k}, \zeta_{k}]_{f}
	\end{align*}
	where $f(t) = (\zeta_{0} - t)^{-1}\left(\prod_{i > k}(t - \zeta_{i})\right)^{-2}$ and $k$ is the number of negative $\zeta_{i}$`s.
\end{proof}

\subsection{Characterizations for the matrix convexity}

\begin{proof}[Proof of Theorem \ref{hankel_con}]
	The necessity of the condition was proven in \cite{Tom}. For the other direction, by Lemma \ref{convex_crazy_formula} we can write
	\begin{align*}
		Kr(\lambda, \Lambda) = 2 n\int_{-\infty}^{\infty} C^{T}(t)K(t)C(t)J_{\lambda_{0}}(t)d t.
	\end{align*}
	But as in the proof of Theorem \ref{hankel_mon}, we see now that the Kraus matrix is an integral of positive matrices, hence positive, and Theorem \ref{basic_con} finishes the claim.
\end{proof}

The next theorem finally completes the characterization of $n$-convex functions. The original characterization of Kraus is also improved.

\begin{lause}\label{convex_complete}
Let $n \geq 2$, and $(a, b)$ be an open interval. Now for $f : (a, b) \to \R$ the following are equivalent
\begin{enumerate}[(i)]
	\item $f$ is $n$-convex.
	\item $f \in C^{2}(a, b)$ and the Kraus matrix $Kr(\lambda_{0}, \Lambda, f)$ is positive for any tuple $\Lambda \in (a, b)^{n}$ and $\lambda_{0} \in \Lambda$.
	\item $f \in C^{2}(a, b)$ and the Kraus matrix $Kr(\lambda_{0}, \Lambda, f)$ is positive for any tuple $\Lambda \in (a, b)^{n}$ and $\lambda_{0} \in (a, b)$.
	\item $f \in C^{2 n - 2}(a, b)$, $f^{(2 n - 2)}$ is convex, and the Hankel matrix $K_{n}(t, f)$, which makes sense almost everywhere, is positive for almost every $t \in (a, b)$.
\end{enumerate}
\end{lause}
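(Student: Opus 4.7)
My plan is to close the equivalences via the chain $(i)\Leftrightarrow(ii)$, $(iii)\Rightarrow(ii)$, $(i)\Rightarrow(iii)$, and $(i)\Leftrightarrow(iv)$. The equivalence $(i)\Leftrightarrow(ii)$ is exactly Theorem \ref{basic_con}, and $(iii)\Rightarrow(ii)$ is a tautology, since the constraint $\lambda_{0}\in\Lambda$ in (ii) is a specialization of $\lambda_{0}\in(a,b)$ in (iii). So the substantive tasks are the improved Kraus characterization $(i)\Rightarrow(iii)$ and the extension of Theorem \ref{hankel_con} to the regularity class appearing in (iv).

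For $(i)\Rightarrow(iii)$ in the smooth case $f\in C^{2n}$, I will first invoke Theorem \ref{hankel_con} to obtain $K_{n}(t,f)\geq 0$ for every $t\in(a,b)$, and then substitute into the integral representation of Theorem \ref{convex_crazy_formula}. The crucial observation is that Lemma \ref{con_pos} yields $J_{\lambda_{0},\Lambda}\geq 0$ for \emph{any} $\lambda_{0}\in(a,b)$, not just $\lambda_{0}\in\Lambda$; hence the integrand $C^{T}(t,\Lambda)K_{n}(t,f)C(t,\Lambda)\,J_{\lambda_{0},\Lambda}(t)$ is a positive matrix times a nonnegative scalar, and integrating gives $Kr(\lambda_{0},\Lambda,f)\geq 0$. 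For general $n$-convex $f$ (which already has $f\in C^{2}$ by Kraus's theorem) I will regularize: $f_{\varepsilon}$ is smooth and $n$-convex on a slightly smaller interval, so the smooth case gives $Kr(\lambda_{0},\Lambda,f_{\varepsilon})\geq 0$, and letting $\varepsilon\to 0$ transfers the positivity to $f$ itself, since the entries of the Kraus matrix are continuous functionals of the $C^{2}$-jet of $f$.

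The equivalence $(i)\Leftrightarrow(iv)$ reduces to Theorem \ref{hankel_con} for $C^{2n}$ functions; to accommodate the regularity class in (iv) I will use a standard regularization sandwich. For $(iv)\Rightarrow(i)$, convolving $f$ with a smooth even nonnegative bump $\phi_{\varepsilon}$ preserves pointwise positivity of the Hankel matrix (as an average of positive matrices), so $K_{n}(t,f_{\varepsilon})\geq 0$ everywhere and Theorem \ref{hankel_con} makes $f_{\varepsilon}$ $n$-convex; since $n$-convexity is closed under pointwise convergence of continuous functions, $f$ is $n$-convex. For $(i)\Rightarrow(iv)$ I must first upgrade the Kraus regularity $f\in C^{2}$ to the claimed $C^{2n-2}$ with convex $(2n-2)$-derivative, after which Hankel positivity a.e.\ follows by regularizing and invoking Theorem \ref{hankel_con} on $f_{\varepsilon}$, then taking a.e.\ limits.

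The main obstacle is the higher regularity step in $(i)\Rightarrow(iv)$: this is the convex analogue of the Dobsch--Donoghue regularity theorem and parallels the rather intricate regularization argument in \cite{Don} for the monotone case, extracting derivative bounds from Kraus matrix positivity at carefully chosen near-coincident tuples and passing to the limit. Once this bootstrapping is accepted, the remaining assembly of the four equivalences is conceptual, driven entirely by the integral formula of Theorem \ref{convex_crazy_formula} together with Lemma \ref{con_pos}.
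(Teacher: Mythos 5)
Your proposal is correct and follows essentially the same route as the paper: $(i)\Leftrightarrow(ii)$ from Kraus, $(i)\Leftrightarrow(iv)$ from Theorem \ref{hankel_con} plus regularization, the passage to $(iii)$ via the integral representation of Theorem \ref{convex_crazy_formula} with Lemma \ref{con_pos} (whose validity for arbitrary $\lambda_{0}$ is indeed the key point), and $(iii)\Rightarrow(ii)$ trivially. The paper is equally terse about the two regularization steps you flag (the $C^{2}$-to-$C^{2n-2}$ bootstrap in $(i)\Rightarrow(iv)$ and the mollification in $(iv)\Rightarrow(i)$), deferring them to the Donoghue-style arguments for the monotone case, so no additional gap is introduced.
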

\begin{proof}
	$(i) \Leftrightarrow (ii)$ was proven in \cite{Kra}. For $C^{2 n}$ functions $(i) \Leftrightarrow (iv)$ is Theorem \ref{hankel_con}; the proof of Theorem \ref{hankel_con} also gives $(iv) \Rightarrow (iii)$ in this case. For merely $C^{2 n - 2}$ functions these claims follow from regularization techniques as in the monotone case. $(iii) \Rightarrow (ii)$ is trivial.
\end{proof}

We also get an interesting corollary connecting the monotonicity to convexity, extending a result in \cite{Ben}.
\begin{kor}\label{divided_connection}
Let $n \geq 2$, and $(a, b)$ be an open interval. If $f : (a, b) \to \R$ is $n$-convex, then for any $\lambda_{0} \in (a, b)$ the function $g = (x \mapsto [x, \lambda_{0}]_{f})$ is $n$-monotone.
\end{kor}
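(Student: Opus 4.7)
The plan is to exhibit an exact matrix identity $L(\Lambda, g) = Kr(\lambda_{0}, \Lambda, f)$ for every tuple $\Lambda \in (a,b)^{n}$, and then read off $n$-monotonicity of $g$ directly from $n$-convexity of $f$ via the characterizations already at our disposal.

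The crux of the argument is the scalar identity $[\lambda_{i}, \lambda_{j}]_{g} = [\lambda_{i}, \lambda_{j}, \lambda_{0}]_{f}$ for arbitrary $\lambda_{i}, \lambda_{j} \in (a,b)$. Since $g(x) = [x, \lambda_{0}]_{f}$, the recursive definition of the divided difference gives
\[
[\lambda_{i}, \lambda_{j}]_{g} = \frac{g(\lambda_{i}) - g(\lambda_{j})}{\lambda_{i} - \lambda_{j}} = \frac{[\lambda_{i}, \lambda_{0}]_{f} - [\lambda_{0}, \lambda_{j}]_{f}}{\lambda_{i} - \lambda_{j}},
\]
and the symmetry of divided differences rewrites the right-hand side as $[\lambda_{i}, \lambda_{0}, \lambda_{j}]_{f} = [\lambda_{i}, \lambda_{j}, \lambda_{0}]_{f}$. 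The coincident cases $\lambda_{i} = \lambda_{j}$ (or $\lambda_{i} = \lambda_{0}$) are handled by the continuous extension of divided differences, available since $f \in C^{2}(a,b)$ and hence $g \in C^{1}(a,b)$ by Theorem \ref{convex_complete}.

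Assembling these identities entry by entry yields $L(\Lambda, g) = Kr(\lambda_{0}, \Lambda, f)$. This is where the improved characterization Theorem \ref{convex_complete}(iii) becomes essential: because $f$ is $n$-convex, $Kr(\lambda_{0}, \Lambda, f) \geq 0$ for every $\Lambda \in (a,b)^{n}$ and every $\lambda_{0} \in (a,b)$, not just for $\lambda_{0}$ drawn from the tuple itself. Consequently $L(\Lambda, g) \geq 0$ for all $\Lambda$, and Theorem \ref{basic_mon} concludes that $g$ is $n$-monotone. The only point worth flagging is precisely this appeal to (iii) rather than Kraus's original formulation (ii): a generic tuple $\Lambda$ used to test monotonicity of $g$ need not contain the fixed point $\lambda_{0}$, so the classical version would be insufficient and the extension proved in the present paper is exactly what makes the corollary go through.
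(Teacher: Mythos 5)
Your proof is correct and takes essentially the same route as the paper, which simply notes the identity $L(\Lambda, g) = Kr(\lambda_{0}, \Lambda, f)$; you have merely spelled out the divided-difference computation behind it. Your observation that the strengthened characterization (iii) of Theorem \ref{convex_complete} (with $\lambda_{0}$ varying freely) is what makes the argument work is exactly the point the paper intends.
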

\begin{proof}
	Simply note that $L(\Lambda, g) = Kr(\lambda_{0}, \Lambda, f)$.
\end{proof}

\begin{huom}
	The ideas introduced in the paper can be generalized to characterize more general class of functions called matrix $k$-tone functions, introduced in \cite{Franz}. A paper discussing related questions in this more general setting is in preparation.
\end{huom}

\section{Acknowledgements}

We thank the open-source mathematical software Sage \cite{Sag} for invaluable support in discovering the main identities of this paper. We are also truly grateful to O. Hirviniemi, J. Junnila and E. Saksman, and anonymous reviewers for their helpful comments on the earlier versions of the manuscript.

\bibliography{arxiv_v3}
\bibliographystyle{abbrv}

\end{document}